\numberwithin{equation}{section}
\newtheoremstyle{personal}%
{12pt}
{12pt}
{\slshape}
{}
{\bfseries}
{.}
{.5em}
{}
\theoremstyle{personal}%
\newtheorem{thm}{Theorem}[section]
\newtheorem{cor}[thm]{Corollary}
\newtheorem{lem}[thm]{Lemma}
\newtheorem{prop}[thm]{Proposition}
\theoremstyle{definition}
\newtheorem{dfn}[thm]{Definition}
\theoremstyle{remark}
\newtheorem{rem}[thm]{Remark}
\newcommand{\N}{\mathds{N}}
\newcommand{\Z}{\mathds{Z}}
\newcommand{\R}{\mathds{R}}
\newcommand{\C}{\mathds{C}}
\newcommand{\T}{\mathds{T}}
\newcommand{\V}{\mathcal{V}}
\newcommand{\PP}{\mathbb{P}}
\newcommand{\ps}{\partial_s}
\newcommand{\<}{\langle}
\renewcommand{\>}{\rangle}
\newcommand{\na}{\nabla}
\renewcommand{\S}{\mathbb{S}}
\newcommand{\A}{\mathbb{A}}
\newcommand{\pt}{\partial_t}
\newcommand{\vp}{\varphi}
\newcommand{\red}{/\!\!/}
\DeclareMathOperator{\ad}{ad}
\DeclareMathOperator{\Ad}{Ad}
\DeclareMathOperator{\rk}{rk}
\newcommand{\h}{\mathfrak{h}}
\newcommand{\g}{\mathfrak{g}}
\newcommand{\hor}{\mathrm{hor}}
\begin{document}

\title[Symplectic reduction and magnetic flows]{On the existence of closed magnetic geodesics via symplectic reduction}

\author[L. Asselle]{Luca Asselle}
\address{Ruhr Universit\"at Bochum, Fakult\"at f\"ur Mathematik, Universit\"atsstra\ss e 150 \newline\indent Geb\"aude NA 4/35, D-44801 Bochum, Germany}
\email{luca.asselle@ruhr-uni-bochum.de}

\author[F. Schm\"aschke]{Felix Schm\"aschke}
\address{Humboldt-Universit\"at zu Berlin, Mathematisch-Naturwissenschaftliche Fakult\"at, \newline\indent Unter den Linden 6, D-10099 Berlin, Germany}
\email{felix.schmaeschke@math.hu-berlin.de}

\date{July 20, 2016}
\subjclass[2000]{37J45, 58E05.}
\keywords{Magnetic flows, periodic orbits, Ma\~n\'e critical values, Rabinowitz action functional, Symplectic reduction.}

\begin{abstract}
Let $(M,g)$ be a closed Riemannian manifold and $\sigma$ be a closed 2-form on $M$ representing an integer cohomology class.
In this paper, using symplectic reduction, we show how the problem of existence of closed magnetic geodesics for the magnetic flow of the pair $(g,\sigma)$  
can be interpreted as a critical point problem for a Rabinowitz-type action functional defined on the cotangent bundle $T^*E$ of a suitable 
$S^1$-bundle $E$ over $M$ or, equivalently, as a critical point problem for a Lagrangian-type action functional defined on the free loopspace of $E$. 
We then study the relation between the stability property of energy hypersurfaces in $(T^*M,dp\wedge dq+\pi^*\sigma)$ and of the corresponding codimension 2 coisotropic submanifolds
in $(T^*E,dp\wedge dq)$ arising via symplectic reduction. Finally, we reprove the main result of \cite{Asselle:2014hc} in this setting. 
\tableofcontents
\end{abstract}

\maketitle
\vspace{-15mm}
\section{Introduction}
\label{s:introduction}

Let $(M,g)$ be a closed Riemannian manifold and let
$\sigma$ be a closed 2-form on $M$. Up to passing to the orientable double cover of $M$ 
we can suppose without loss of generality that $M$ is orientable. Consider the kinetic Hamiltonian
$$ \bar H: T^*M\rightarrow \R\, ,\qquad \bar H(\bar q,\bar p) = \frac 12 |\bar p|_{\bar q}^2\, ,$$
where as usual $|\cdot|$ denotes the (dual) norm on $T^*M$ induced by
the metric $g$. Consider also the twisted symplectic form
$\bar \omega_\sigma=\bar \omega + \bar \pi^*\sigma$, where $\bar \omega=d\bar p\wedge d\bar q$ is
the canonical symplectic form on $T^*M$ and $\bar \pi:T^*M \to M$ the
canonical projection. The pair $(\bar H,\bar \omega_\sigma)$ defines a vector
field $X_{\bar H}^\sigma$ on $T^*M$ by
$$\bar \omega_\sigma \big (  X_{\bar H}^\sigma,\cdot ) = -d\bar H,$$
called the Hamiltonian vector field of $\bar H$ with respect to
$\bar \omega_\sigma$. Its flow $ \Phi^\sigma_{\bar H}:T^*M\rightarrow T^*M$ is
the \textit{magnetic flow of the pair $(g,\sigma)$}. The reason
of this terminology is that it models the motion of a charged
particle in $M$ under the effect of a magnetic field
represented by $\sigma$. In fact, $x:I \to T^*M$ is a flow
line of $X^\sigma_{\bar H}$ if and only if the curve $\mu = \pi \circ x$ satisfies
the second order ordinary differential equation
\[
\nabla_t \dot \mu = Y_\mu(\dot \mu)\,,
\]
where $\nabla_t$ denotes the covariant derivative associated to $g$
and $Y:TM \to TM$ is the linear bundle map (known as \textit{Lorentz force}) given by
\[
g_q(u,Y_{\bar q}(v)) = \sigma_{\bar q}(u,v), \qquad \forall\ u,v \in T_{\bar q} M,\ \forall \bar q \in M.
\]

Periodic orbits of such a flow are usually called \textit{closed magnetic geodesics}. The magnetic flow preserves $\bar H$, since it is the Hamiltonian of the system; therefore it makes sense
to look at periodic orbits on a given level set. In this paper we will
be interested in the following problem: given $\bar k> 0$, does there
exist a period $T>0$ and a curve $x:\R \rightarrow T^*M$ which satisfies
the following conditions?
\begin{equation}
  \left \{\begin{array}{l}
      \dot x (t) = X_{\bar H}^\sigma (x(t))\,;\\ 
      x(T)=x(0)\,;\\
      \bar H(x)=\bar k\, .
\end{array}\right.
\label{eq:I}
\end{equation}

A particular case of magnetic flow is given by the choice $\sigma=0$, in which case we retrieve the \textit{geodesic flow} of $(M,g)$. 
The problem of the existence of closed geodesics has received in the last century the attention of many
outstanding mathematicians as Birkhoff, Lyusternik, Gromoll and Meyer, just to mention few of them. 
The existence of periodic orbits for magnetic flows represents a
natural generalization of the closed geodesic problem. However, unlike
the geodesic case, the dynamics in the magnetic setting turns out to depend essentially on the
kinetic energy of the particle. This is one of the reason why
existence results for closed geodesics cannot be straightforward
generalized to the magnetic setting. In fact, Hedlund
\cite{Hedlund:1932am} provided an example of a ``critical'' energy
level without closed magnetic geodesics on any surface with genus at
least two. On the other hand, almost every energy level contains at
least one closed magnetic geodesic (c.f. \cite{Asselle:2014hc} and
references therein). 

In the literature various approaches and techniques, coming for instance from the classical calculus of variations \cite{Abbondandolo:2013is,Abbondandolo:2014rb,Abbondandolo:2015lt,Asselle:2015ij,Contreras:2006yo,Merry:2010}, 
symplectic geometry \cite{Cieliebak:2010zt,Frauenfelder:2007le,Ginzburg:1987lq,Ginzburg:1994,Ginzburg:2009,Paternain:2009,Schlenk:2006vk,Usher:2009}, symplectic homology \cite{Benedetti:2014} and contact homology \cite{Ginzburg:2014ws}, are used to tackle the problem of existence of closed magnetic geodesics. 
See also \cite{Asselle:2016qv,Taimanov:1991el,Taimanov:1992fs,Taimanov:1992sm} for existence results based on a minimization procedure in case the configuration
space is two-dimensional.
In particular, for magnetic flows defined by an exact 2-form $\sigma=d\theta$ the existence of closed magnetic geodesics can be shown
by using a variational characterization of periodic orbits as critical points of the free-period Lagrangian action functional (see e.g. \cite{Abbondandolo:2013is,Contreras:2006yo}). 
If one tries to generalize this approach dropping the exactness assumption, then one has to overcome the difficulty given by the fact that the action functional is not well-defined but rather ``multi-valued".
Nevertheless, following ideas contained in \cite{Novikov:1982,Novikov:1984hc,Taimanov:1983uo}, progresses in this direction have been recently made in \cite{Asselle:2014hc,Asselle:2015sp} by studying the existence of zeros of the 
action 1-form.

In this paper we use another approach to study the existence of
solutions to \eqref{eq:I} based on the following remark: the
twisted cotangent bundle arises naturally via symplectic reduction
(c.f. \cite[Ex. 5.2]{ReymanSemenov} or \cite[Section 6.6]{OrtegaRatiu}). 
If $\sigma$ represents an integer cohomology class, then this allows to interpret the magnetic flow as a geodesic flow on the cotangent bundle of a suitable
$S^1$-bundle $E$ over $M$, at the cost of introducing a symmetry
group. In particular, closed magnetic geodesics with energy $\bar k$
turn out to correspond to the critical points of a Rabinowitz-type action
functional 
$$\A_k:C^\infty(S^1,T^*E)\times (0,+\infty)\times \R\to \R$$
or equivalently, using the Legendre transform, to the critical points of a Lagrangian-type action functional
$$\S_k:H^1(S^1,E)\times (0,+\infty)\times \R\to \R.$$

Here $k=\bar k + \frac 12$ and $H^1(S^1,E)$ denotes the Hilbert manifold of absolutely continuous loops in $E$ with square-integrable derivative. 
Notice that the correspondence between closed magnetic geodesics and critical points of $\A_k$ would allow to use 
a version of Rabinowitz-Floer homology for contact type (or, at least, stable) coisotropic submanifolds - as developed by Kang in \cite{Kang2013} - to infer  existence on a given energy level.
To this purpose, it is important to study the stability property of such coisotropic submanifolds, also in relation with the stability property of the corresponding hypersurfaces in $T^*M$. This will be carried over in Section 
\ref{stabilityandcontactpropertyofcoisotropicsubmanifolds}, where we also provide some concrete examples. 
In the last part of the paper, building on the latter correspondence, we reprove the main theorem of \cite{Asselle:2014hc} in the setting of magnetic flows given by closed 2-forms representing an integer cohomology
class. 

\begin{thm} 
Let $(M,g)$ be a closed non-aspherical Riemannian manifold, i.e. 
$\pi_\ell(M)\neq 0$ for some $\ell\geq2$, and $\sigma$ be a closed 2-form on $M$ representing an integer cohomology class. Then for almost every $\bar k>0$ there 
exists a contractible closed magnetic geodesic with energy $\bar k$.
\label{thm:main}
\end{thm}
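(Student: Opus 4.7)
The plan is to exploit the variational reformulation established in the earlier sections: closed magnetic geodesics on $(M,g,\sigma)$ with energy $\bar k$ correspond to critical points of the Lagrangian-type action functional $\S_k:H^1(S^1,E)\times(0,+\infty)\times\R\to\R$ with $k=\bar k+\tfrac{1}{2}$, where $E\to M$ is the $S^1$-bundle produced by the symplectic reduction. On top of this, I would run a Struwe-type monotonicity scheme on a minimax class arising from the non-asphericity of $M$, in the spirit of \cite{Asselle:2014hc}.

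First, I would transfer the non-asphericity of $M$ to $E$. The homotopy long exact sequence of the principal bundle $S^1\to E\to M$ together with $\pi_\ell(S^1)=0$ for $\ell\geq 2$ yields $\pi_\ell(E)\cong\pi_\ell(M)$ for all $\ell\geq 3$, whereas for $\ell=2$ the portion
\[
0\to\pi_2(E)\to\pi_2(M)\to\Z\to\pi_1(E)\to\pi_1(M)\to 0
\]
may leave $\pi_2(E)=0$ when the Euler class map $\pi_2(M)\to\Z$ is injective; but in that degenerate situation, since $M$ has the homotopy type of a finite CW complex, non-asphericity of $M$ has to be witnessed also by some $\pi_\ell(M)\neq 0$ with $\ell\geq 3$, hence by $\pi_\ell(E)\neq 0$. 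Using the splitting $\pi_{\ell-1}(\mathcal C)\cong \pi_{\ell-1}(E)\oplus\pi_\ell(E)$ induced by the evaluation fibration (where $\mathcal C\subset H^1(S^1,E)$ denotes the component of contractible loops), one obtains a non-trivial class $0\neq[\Phi]\in\pi_{\ell-1}(\mathcal C)$ for a suitable $\ell\geq 2$.

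Fixing a representative $\Phi:S^{\ell-1}\to\mathcal C$ and extending it trivially in the period and multiplier factors, I would define the minimax value
\[
c(k):=\inf_{\Psi\sim\Phi}\ \sup_{z\in S^{\ell-1}}\S_k(\Psi(z)),
\]
where $\Psi$ ranges over continuous maps homotopic to $\Phi$. Standard arguments (see \cite{Asselle:2014hc}) show that $c(k)$ is finite and monotone in $k$, hence differentiable at almost every $k>0$; at any such differentiability point $k_*$, Struwe's monotonicity trick furnishes a Palais--Smale sequence $(x_n,T_n,\lambda_n)$ for $\S_{k_*}$ at level $c(k_*)$ with periods $T_n$ bounded in terms of $c'(k_*)$.

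The main obstacle lies in the compactness analysis: unlike in the exact magnetic setting of \cite{Asselle:2014hc}, the functional $\S_k$ here carries an additional Lagrange multiplier $\lambda$ originating from the $S^1$-reduction, so the Struwe bound on $T_n$ must be complemented by a uniform a priori bound on $|\lambda_n|$. I expect this companion estimate to follow from the explicit form of the Euler--Lagrange equations for $\S_k$ derived in the earlier sections, combined with the $L^2$-control on $\dot x_n$ implied by the action bound. Once these estimates are in place, standard weak $H^1$-convergence yields a critical point of $\S_k$; via the reduction correspondence this critical point projects to a contractible closed magnetic geodesic of energy $\bar k$ on $M$, contractibility being preserved because the minimax class lies in $\mathcal C$ and the bundle projection sends contractible loops to contractible loops.
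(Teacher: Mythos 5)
Your overall strategy coincides with the paper's: transfer non-asphericity from $M$ to $E$ (the degenerate case in degree $2$ is handled exactly as you suggest, by observing that otherwise $\widetilde M\simeq K(\Z,2)\cong\C\PP^\infty$, impossible for a finite-dimensional manifold), build a minimax class in the contractible component from a nontrivial $\mathfrak u\in\pi_\ell(E)$, and apply Struwe's monotonicity trick at differentiability points of $k\mapsto c(k)$. However, the bound on the multiplier $\vp_n$ that you single out as the ``main obstacle'' is in fact the easy part: on any Palais--Smale sequence, combining $\partial\S_k/\partial T\to 0$ with the action bound gives $\vp_h=2kT_h+T_ho(1)-c+o(1)$ (Lemma \ref{lem:Tvp}), so $\vp_h$ is automatically controlled by $T_h$.

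The genuine gap is elsewhere: Struwe's argument bounds the periods $T_n$ from \emph{above}, but you never rule out $T_n\to 0$, and this is where most of the paper's work goes. Palais--Smale sequences with $T_h\to 0$ degenerate onto fiberwise rotations $\dot\gamma_f=-2\pi a\,Z(\gamma_f)$, whose action levels accumulate at the infinitely many values $-2\pi a$, $a\in\Z$; these are not critical points of $\S_k$ (there $\partial\S_k/\partial T=k>0$), so such a sequence yields nothing, and the negative gradient flow is incomplete precisely because flow lines can escape towards $T=0$. The paper deals with this by (i) showing $\S_k>-2\pi a+\epsilon$ on $\partial\mathcal V_\delta^a$ while $\inf_{\mathcal V_\delta^a}\S_k=-2\pi a$ (Lemma \ref{boundactiononvdelta}), (ii) truncating the flow near the fiberwise rotations to obtain a complete flow preserving the class, (iii) choosing the minimax class so that every competitor must cross $\partial\mathcal V_\delta^0$ --- which uses crucially that $\pi_\ell(\tau)(\mathfrak u)\neq 0$ in $\pi_\ell(M)$, since a competitor trapped in $\mathcal V_\delta^0$ would project to spheres of loops of arbitrarily small energy in $M$ and hence be nullhomotopic there --- and (iv) proving that near-maximal points of a competitor cannot lie in any $\{\S_k<-2\pi a+\epsilon/2\}\cap\mathcal V_\delta^a$ (Lemma \ref{lem:almostmaximum}), so the resulting Palais--Smale sequence has $T_h$ bounded away from zero (Corollary \ref{cor:palaissmale}). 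Without an argument of this kind your minimax level could equal some $-2\pi a$ and the construction could collapse onto a fiberwise rotation, producing no closed magnetic geodesic.
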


We end this introduction by giving a summary of the contents of this paper:
In Section \ref{s:symplecticreduction} we recall how the magnetic flow can be seen as a projected geodesic flow and 
introduce the functional $\A_k$.
 In Section \ref{stabilityandcontactpropertyofcoisotropicsubmanifolds} we  discuss the relation 
between stability and contact property of energy hypersurfaces and of the corresponding coistropic submanifolds arising via symplectic reduction.
In Section \ref{thelagrangianactionfunctional} we introduce the functional $\S_k$ and study its properties.
 In Section \ref{proofoftheorem} we prove Theorem \ref{thm:main}.


\section{Symplectic reduction}
\label{s:symplecticreduction}

\subsection{The magnetic flow as a projected geodesic flow.}Let
$(M,g)$ be a closed orientable Riemannian manifold and let $\sigma$ be
a closed 2-form on $M$. We call the pair $(T^*M,\bar \omega_\sigma:=d\bar p\wedge d\bar q +
\bar \pi^*\sigma)$ the \emph{twisted cotangent bundle}. It has been known
for a long time that twisted cotangent bundles arise via symplectic
reduction (c.f.\ for example \cite[Ex. 5.2]{ReymanSemenov}). Here we
quickly recall this construction.

Throughout this paper we assume that the deRahm cohomology
class represented by $\sigma$ is integral, i.e.\ $[\sigma]\in
H^2(M;\Z)$. Let $S^1=\{ e^{i t} \in \C \mid t \in \R\}$ be the Lie group of
complex numbers of norm one. If $\sigma$ represents an integral
cohomology class, then there is a principal $S^1$-bundle $\tau:E\rightarrow
M$ with Euler class $e(E)=[\sigma]\in H^2(M;\Z)$. 

Recall that the Euler class is constructed as follows: choose a \emph{connection
  $1$-form} $\theta\in \Omega^1(E)$, which is an $S^1$-invariant
$1$-form satisfying $\theta(Z)=1$, where $Z$ denotes the fundamental
vector field of the $S^1$-action
$$Z_q=\frac{d}{dt}\, e^{it}q\Big |_{t=0} \in T_q E, \qquad \forall q \in E\,.$$ 
The form $\theta$ induces a splitting of the tangent bundle 
\begin{equation}
  \label{eq:split}
TE  = \ker \theta \oplus \R\! \cdot\!Z,  
\end{equation}
(vectors in $\ker \theta$ are called \emph{horizontal}), and uniquely defines a \emph{curvature form} $\tilde \sigma \in \Omega^2(M)$ by
\[
\tilde \sigma_{\bar q}(u,v)= (d\theta)_q(u^\hor,v^\hor)\,,
\]
where $u,v \in T_{\bar q} M$, $\bar q \in M$, $q \in \tau^{-1}(\bar q)$ and
$u^\hor, v^\hor \in T_q E$ are horizontal vectors that
project to $u,v$ via $d_q \tau$ respectively (called \emph{horizontal lift}). Obviously $d
\tilde \sigma=0$. The \textit{Euler class} is defined as the cohomology class
represented by $\tilde \sigma$. To see that $[\tilde \sigma]$ does not depend on the
choice of $\theta$, one shows that any another connection form
$\theta'$ must satisfy $\theta' = \theta+ \tau^*\beta$ for some
$\beta \in \Omega^1(M)$. The curvature of $\theta'$ is therefore
$\tilde \sigma + d \beta$ and hence defines the same cohomology class.
Notice that this also shows that the map $\theta\mapsto \tilde \sigma$ from the space of connection 1-forms to the space of closed forms on $M$ representing the cohomology class $e(E)$ is surjective.
In particular, for a given closed 2-form $\sigma$ on $M$ representing an integer cohomology class we can always find a connection 1-form $\theta$ such that $d\theta=\tau^*\sigma$. 

By push-forward  the $S^1$-action on $E$ lifts canonically to an $S^1$-action on $T^*E$
$$T^*E \rightarrow T^*E,\quad (q,p) \mapsto \left (e^{it}q, p\cdot (d_qe^{it})^{-1}\right).$$
 It is a classical fact (see for instance \cite{Abraham:1978mf}) that this action on $T^*E$ is the Hamiltonian flow with
respect to the standard symplectic structure of the Hamiltonian
$$A:T^*E\longrightarrow \R\, ,\qquad (q,p)\longmapsto \<p,Z_q\>\,. $$
Since the action is free, for every $c\in \R$ the symplectic quotient
is well-defined
\[
T^*E\red_c\, S^1 := A^{-1}(c)/S^1\,.
\]
This quotient manifold is naturally endowed with a symplectic form
$\bar \omega_c$, which is defined as the unique form such that $\mathrm{pr}^*
\bar \omega_c = \imath^*\omega$, where $\imath:A^{-1}(c)\hookrightarrow
T^*E$, $\mathrm{pr}:A^{-1}(c)\rightarrow T^*E\red_c\,S^1$ and $\omega$ denote
respectively the natural inclusion, the projection map and the
standard symplectic form on $T^* E$. Fix a connection form $\theta$ and define a map $\Pi_c:A^{-1}(c) \to T^*M$ implicitly via 
\begin{equation}\label{eq:Pic}
\<\Pi_c(q,p), d_q \tau\, v\>= \<p,v\> - c\, \theta(v),\qquad \forall\ v \in T_q E\,.
\end{equation}
Note that $\Pi_c$ is well-defined because the kernel of $d_q \tau$ is spanned precisely by the fundamental vector field, on which the right-hand side vanishes. Moreover it is not hard to see that $\Pi_c$ is a bundle map with fibres consisting of $S^1$-orbits for the lifted $S^1$-action. We conclude that $\Pi_c$ induces a diffeomorphism $T^*E\red_c S^1 \cong T^*M$.

\begin{prop}\label{prp:red}
For all $c \in \R$ the map $\Pi_c$ induces a symplectomorphism 
\[
(T^*E\red_c S^1,\bar \omega_c) \cong (T^*M, \bar \omega + c \bar \pi^*\sigma)\,.
\]
\end{prop}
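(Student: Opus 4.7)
The plan is to show that $\Pi_c$ descends to a diffeomorphism and then to compare the canonical $1$-forms on $T^*E$ and $T^*M$ under $\Pi_c$.

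\textbf{Descent to a diffeomorphism.} First I would verify that $\Pi_c$ is $S^1$-invariant for the lifted action $\rho_s(q,p)=(e^{is}q,\, p\circ (d_q e^{is})^{-1})$. For $v\in T_{e^{is}q}E$, setting $w=(d_q e^{is})^{-1}v$, the $S^1$-invariance of $\theta$ together with $\tau\circ e^{is}=\tau$ give
\begin{equation*}
\<\Pi_c(\rho_s(q,p)), d\tau\cdot v\> = \<p,w\> - c\,\theta(w) = \<\Pi_c(q,p), d\tau\cdot w\> = \<\Pi_c(q,p), d\tau\cdot v\>.
\end{equation*}
Second, for any $(\bar q,\bar p)\in T^*M$ and any $q\in\tau^{-1}(\bar q)$, the equation $\<p,v\>=\<\bar p, d\tau\cdot v\>+c\,\theta(v)$ uniquely defines $p\in T_q^*E$ with $\Pi_c(q,p)=(\bar q,\bar p)$ and $A(q,p)=c\,\theta(Z_q)=c$. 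Hence $\Pi_c\colon A^{-1}(c)\to T^*M$ is a surjective submersion whose fibres coincide with the orbits of the lifted $S^1$-action, so it factors through a diffeomorphism $\bar \Pi_c\colon T^*E\red_c S^1 \to T^*M$.

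\textbf{Comparison of tautological forms.} Let $\lambda_M$ and $\lambda_E$ denote the tautological $1$-forms on $T^*M$ and $T^*E$, so that $\bar\omega=d\lambda_M$ and $\omega=d\lambda_E$, and let $\pi_E\colon T^*E\to E$ denote the canonical projection. Since $\bar\pi\circ\Pi_c=\tau\circ\pi_E\circ\imath$, for $W\in T_{(q,p)}A^{-1}(c)$ the defining relation \eqref{eq:Pic} yields
\begin{equation*}
(\Pi_c^*\lambda_M)(W) = \<\Pi_c(q,p), d\tau\cdot d\pi_E\cdot W\> = \<p, d\pi_E\cdot W\> - c\,\theta(d\pi_E\cdot W) = (\imath^*\lambda_E)(W) - c\,(\imath^*\pi_E^*\theta)(W),
\end{equation*}
whence $\Pi_c^*\lambda_M = \imath^*\lambda_E - c\,\imath^*\pi_E^*\theta$.

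\textbf{Conclusion.} Assuming (as the paragraph preceding the proposition permits) that $\theta$ has been chosen with $d\theta=\tau^*\sigma$, I would differentiate the identity above and use $\tau\circ\pi_E\circ\imath=\bar\pi\circ\Pi_c$ to obtain
\begin{equation*}
\Pi_c^*\bar\omega = \imath^*\omega - c\,(\bar\pi\circ\Pi_c)^*\sigma = \imath^*\omega - c\,\Pi_c^*\bar\pi^*\sigma,
\end{equation*}
that is $\Pi_c^*(\bar\omega + c\,\bar\pi^*\sigma)=\imath^*\omega$. Together with the defining property $\pr^*\bar\omega_c=\imath^*\omega$ of the reduced form and the factorisation $\Pi_c=\bar\Pi_c\circ\pr$, injectivity of $\pr^*$ on forms on the quotient gives $\bar\Pi_c^*(\bar\omega+c\,\bar\pi^*\sigma)=\bar\omega_c$, as desired. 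The only real computational content sits in Step 2 and the subsequent use of $d\theta=\tau^*\sigma$; no serious obstacle is anticipated, as this is the textbook cotangent-bundle instance of Marsden--Weinstein reduction, the minor subtlety being to make the (legitimate) choice of connection whose curvature equals $\sigma$ on the nose rather than merely in cohomology.
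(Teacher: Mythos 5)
Your proof is correct and follows essentially the same route as the paper: the key step in both is pulling back the tautological one-form via the defining relation \eqref{eq:Pic} and the identity $\bar\pi\circ\Pi_c=\tau\circ\pi$ to get $\Pi_c^*\bar\lambda=\imath^*\lambda-c\,\imath^*\pi^*\theta$, then differentiating with $d\theta=\tau^*\sigma$. The only difference is that you also spell out the $S^1$-invariance and descent of $\Pi_c$, which the paper disposes of in the paragraph preceding the proposition rather than inside the proof.
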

\begin{proof}
We need to show that $\Pi_c^*(\bar \omega + c\bar \pi^*\sigma) = i^* \omega$. Since we have $\bar \pi\circ \Pi_c=\tau \circ \pi$ we conclude that 
\[
\Pi_c^*\bar \pi^* \sigma = \pi^* \tau^* \sigma = \pi^* d \theta=d\pi^*\theta\,.
\]
Hence it suffices to see that $\Pi^*_c\bar \lambda + c\pi^* \theta = i^*\lambda$, where $\bar \lambda$, $\lambda$ are the Liouville forms in $T^*M$ and $T^*E$ respectively. For any $v \in T_{(q,p)}A^{-1}(c)$ we denote $(\bar q,\bar p) = \Pi_c(q,p)$ and compute
\[
(\Pi^*_c \bar \lambda)_{(q,p)} (v) = \< \bar p, d\bar \pi d \Pi_\theta v\> = \<\bar p, d\tau d\pi v\>\,,\]
and using the definition~\eqref{eq:Pic} we continue the computation
\[
(\Pi^*_c \bar \lambda)_{(q,p)} (v)
 = 
\<p,d\pi v\>-c\theta(d\pi v) = \lambda_{q,p}(v) - c(\pi^*\theta)_{q,p}(v)\,.
\]
This shows the claim. 
\end{proof}

Fix a connection form $\theta$ for $\sigma$ and lift the metric on $M$ to a metric
on $E$ via $g^\theta:=\tau^*g + \theta\otimes \theta$. In other words,
consider the unique metric on $E$ such that:
\begin{itemize}
\item $d_q \tau:\ker \theta_q \to T_{\tau q}M$ is an isometry for all $q\in E$.
\item $g^\theta(X,X) = 1$,
\item the splitting~\eqref{eq:split} is orthogonal.
\end{itemize}

By abuse of notation we denote the (dual) norm on $T^*E$ induced by
$g^\theta$ again with $|\,\cdot\,|$ and the kinetic Hamiltonian again with
$$H:T^*E\longrightarrow \R\, ,\qquad H(q,p)=\frac 12 |p|_q^2 .$$

Since by construction the metric $g^\theta$ is $S^1$-invariant the
Hamiltonian flow of $H$ commutes with the Hamiltonian flow of $A$. In particular the flow of $H$ preserves the levels of $A$ and via Proposition~\ref{prp:red} projects to a Hamiltonian flow on $(T^*M,\bar \omega_\sigma)$. We show now that this reduced flow is precisely the magnetic flow. 
\begin{lem} \label{lem:sympreduction} We have $H=\bar H \circ \Pi_1 + \frac 12$ and  $d\Pi_1 X_H = X^\sigma_{\bar H}$. In particular, a curve $\bar x:\R\rightarrow
  T^*M$ that satisfies \eqref{eq:I} for some $T>0$ lifts to a curve
  $x:\R\rightarrow T^* E$ with
\begin{equation}
\left \{\begin{array}{l}
          \dot x(t) = X_{H}(x(t))\,;\\
          x(T) = e^{i\vp} x(0)\, ;\\
          H(x)=\bar k+\frac 12\, ;\\
          A(x)=1\, ,
          \end{array}\right.
\label{eq:II}
\end{equation}
for some $\vp \in \R$.  Conversely, a curve $x:\R\rightarrow T^*E$
satisfying \eqref{eq:II} projects to a closed magnetic geodesic with
energy $\bar k$.
\end{lem}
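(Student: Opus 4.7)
\medskip

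The plan is to split the proof into two halves: first establish the pointwise identity $H=\bar H\circ \Pi_1+\tfrac12$ on $A^{-1}(1)$ and derive $d\Pi_1 X_H=X^\sigma_{\bar H}$ as a direct consequence of symplectic reduction; then use the fact that $X_H$ preserves $A^{-1}(1)$ to set up the correspondence of solutions.

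\medskip

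For the first half, I would use the orthogonal splitting $TE=\ker\theta\oplus\R\!\cdot\!Z$ provided by the chosen connection. Dually this gives $T_q^*E=\mathrm{Ann}(Z)\oplus \R\!\cdot\!\theta_q$, and since $Z$ has unit length and $\theta$ is its metric dual, the dual metric splits orthogonally as well. Writing $p=p^\hor+A(q,p)\,\theta_q$ with $p^\hor\in\mathrm{Ann}(Z_q)$, I get $|p|^2=|p^\hor|^2+A(q,p)^2$. On the horizontal part, the defining relation \eqref{eq:Pic} for $\Pi_c$ shows that $\Pi_c(q,p)$ is exactly the covector on $M$ obtained from $p^\hor$ through the isometry $d_q\tau\colon\ker\theta_q\to T_{\tau q}M$, so $|p^\hor|_{g^\theta}=|\Pi_c(q,p)|_g$. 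Restricting to $A^{-1}(1)$ and dividing by two yields $H=\bar H\circ\Pi_1+\tfrac12$.

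\medskip

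For the vector field identity, I would invoke Proposition \ref{prp:red}: $\Pi_1$ realizes $(T^*E\red_1 S^1,\bar\omega_1)\cong (T^*M,\bar\omega_\sigma)$ as a symplectomorphism. Since $g^\theta$ is $S^1$-invariant, $H$ is $S^1$-invariant; hence $\{H,A\}=0$, which means $X_H$ is tangent to $A^{-1}(1)$ and descends to the Hamiltonian vector field of the reduced Hamiltonian. By the first step the reduced Hamiltonian transported through $\Pi_1$ is $\bar H+\tfrac12$, whose Hamiltonian vector field with respect to $\bar\omega_\sigma$ equals $X^\sigma_{\bar H}$ (constants do not matter). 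This gives $d\Pi_1\, X_H=X^\sigma_{\bar H}$.

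\medskip

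For the correspondence of solutions, given $\bar x$ solving \eqref{eq:I}, I would pick any $x_0\in\Pi_1^{-1}(\bar x(0))\subset A^{-1}(1)$ and let $x(t)$ be the integral curve of $X_H$ through $x_0$. Since $X_H$ preserves $A^{-1}(1)$ the curve stays there, and since $d\Pi_1 X_H=X^\sigma_{\bar H}$ it projects to $\bar x$. Periodicity of $\bar x$ gives $\Pi_1(x(T))=\Pi_1(x(0))$, so the two points differ by the $S^1$-action, yielding $x(T)=e^{i\vp}x(0)$ for some $\vp\in\R$; the energy conditions $H(x)=\bar k+\tfrac12$ and $A(x)=1$ follow from the first step and the choice of level. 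The converse is immediate: if $x$ satisfies \eqref{eq:II}, then $\bar x:=\Pi_1\circ x$ satisfies $\dot{\bar x}=X^\sigma_{\bar H}(\bar x)$, closes up at time $T$ because $\Pi_1$ is $S^1$-invariant, and has energy $\bar k$.

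\medskip

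The only delicate step is the first one: it requires checking that the orthogonality of the splitting \eqref{eq:split} transfers to the dual metric and identifying, via \eqref{eq:Pic}, the horizontal cotangent component $p^\hor$ on $A^{-1}(1)$ with $\Pi_1(q,p)$ through the isometry $d_q\tau$. Once that bookkeeping is done, everything else reduces to standard facts about symplectic reduction and $S^1$-invariant Hamiltonians.
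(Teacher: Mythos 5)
Your proposal is correct and follows essentially the same route as the paper: establish $|p|^2=|\bar p|^2+1$ on $A^{-1}(1)$ to get $H=\bar H\circ\Pi_1+\tfrac12$, then deduce everything else from the symplectomorphism of Proposition \ref{prp:red} and $S^1$-invariance of $H$. The only local difference is that you obtain the norm identity from the orthogonal dual splitting $p=p^\hor+A(q,p)\,\theta_q$ (a Pythagorean argument), whereas the paper maximizes $\<p,v\>$ over unit tangent vectors; both are valid, and your more explicit treatment of the descent of $X_H$ and the lifting of solutions simply fills in what the paper compresses into ``the rest follows.''
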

\begin{proof}
Given any $(q,p) \in A^{-1}(1)$ and $v \in T_q E$. Set $(\bar q,\bar p) :=\Pi_1(q,p)$ and $\bar v:=d_q \tau v$. Splitting into horizontal and vertical components we conclude by \eqref{eq:Pic}
$$\<p,v\> = \<p,v^\hor\> + \<Z,v\>, \quad \<p,v^\hor\>= \<\bar p,\bar v\>.$$ 
Hence by definition of the dual norm
\[
|p| = \max_{|v|^2 =1} \<p,v\> = \max_{x\in [-1,1]} \max_{|v^\hor|=\sqrt{1-x^2}} \<p,v^\hor\> + x = \max_x \sqrt{1-x^2}|\bar p| + x\,.
\]
By maximization in the $x$ variable we verify $|p| =\sqrt{|\bar p|^2+1}$. This shows $H = \bar H \circ \Pi_1 + \frac 12$. The rest follows since by Proposition~\ref{prp:red} we have $\Pi_1^*\bar \omega_\sigma = i^*\omega$. 
\end{proof}

\vspace{2mm}

\subsection{A Rabinowitz-type action functional.}
\label{arabinowitz}
Lemma~\ref{lem:sympreduction} above shows that, in order to find closed magnetic geodesics
with energy $\bar k$, it suffices to look for geodesics in $T^*E$
with kinetic energy $\bar k +\frac 12$ that are closed up to
$S^1$-action and which lie on the level set $A^{-1}(1)$. For our
variational approach we reformulate~\eqref{eq:II} into a problem of
closed curves with period $1$. More precisely, if $(x,T,\vp)$ is a solution
of~\eqref{eq:II}, then the curve $y:[0,1]\to T^*E$ defined by $y(t):=e^{-it \vp} x(t T)$
satisfies
\begin{equation}
\left \{\begin{array}{l}
          \dot y(t) = -\vp X_A(y(t)) + T X_{H}(y(t))\,;\\
          y(1) =  y(0)\, ;\\
          H(y)=\bar k+\frac 12\, ;\\
          A(y)=1\, .
          \end{array}\right.
\label{eq:III}
\end{equation}
Conversely, every solution of~\eqref{eq:III} gives a solution
of~\eqref{eq:II} by reversing the rescaling. 
\begin{lem}\label{lem:A}
  Set $k:=\bar k+\frac 12$. A triple $(y,T,\vp)$
  satisfies~\eqref{eq:III} if and only if it is a critical point of the functional 
  $\A_k:C^\infty(S^1,T^*E) \times (0,+\infty) \times \R\to \R$ given by
  \begin{equation}
  \A_k(y,T,\vp) = \int_0^1 y^* \lambda \ - \int_0^1 \big (TH_k(y) - \vp A_1(y)\big )\, dt\,,
  \label{functionalak}
  \end{equation}
  where $\lambda$ is the Liouville $1$-form, $H_k(q,p):= H(q,p) - k$
  and $A_1(q,p) := A(q,p) -1$.
\end{lem}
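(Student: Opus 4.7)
My plan is to carry out the standard variational computation for a Rabinowitz-type functional with two multipliers, and then to use the $S^1$-invariance of $H$ to upgrade integral constraints into pointwise ones.

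First, I would compute the three partial derivatives of $\A_k$ separately. For a smooth variation $y_s$ of $y$ with $\partial_s y_s|_{s=0} = \xi$, the standard identity (obtained by pulling $\lambda$ back to $S^1 \times (-\epsilon,\epsilon)$ and using Cartan's formula, with the boundary terms vanishing because the loops are closed) gives
\[
\frac{d}{ds}\bigg|_{s=0}\int_0^1 y_s^*\lambda = \int_0^1 \omega(\xi,\dot y)\, dt,
\]
where $\omega = d\lambda$. Combining with the Hamiltonian identities $dH(\xi)=\omega(\xi,X_H)$ and $dA(\xi)=\omega(\xi,X_A)$ yields
\[
d_y \A_k \cdot \xi = \int_0^1 \omega\bigl(\xi,\ \dot y - T X_H(y) + \vp X_A(y)\bigr)\, dt,
\]
so by non-degeneracy of $\omega$ and the fundamental lemma of the calculus of variations this vanishes for all $\xi$ if and only if $\dot y = -\vp X_A(y) + T X_H(y)$. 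The partial derivatives in $T$ and $\vp$ are immediate:
\[
\partial_T \A_k = -\int_0^1 H_k(y)\, dt, \qquad \partial_\vp \A_k = \int_0^1 A_1(y)\, dt,
\]
so these vanish exactly when $\int_0^1 H(y)\, dt = k$ and $\int_0^1 A(y)\, dt = 1$.

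Next I would upgrade these integral conditions to the pointwise conditions appearing in \eqref{eq:III}. The key observation is that, since the metric $g^\theta$ is $S^1$-invariant, so is $H$, and hence $dH(X_A)=0$; by antisymmetry of the Poisson bracket this is equivalent to $dA(X_H)=0$, so $\{H,A\}=0$. Using the equation $\dot y = T X_H(y) - \vp X_A(y)$ derived above, one then computes
\[
\tfrac{d}{dt} H(y) = T\, dH(X_H) - \vp\, dH(X_A) = 0, \qquad \tfrac{d}{dt} A(y) = 0.
\]
Thus $H(y)$ and $A(y)$ are both constant along $y$, and the integral constraints force $H(y)\equiv k$ and $A(y)\equiv 1$, which together with $y(0)=y(1)$ give exactly \eqref{eq:III}.

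The converse direction is a direct check: if $(y,T,\vp)$ solves \eqref{eq:III}, then the integrand in $d_y\A_k\cdot\xi$ vanishes identically, and $H(y)\equiv k$, $A(y)\equiv 1$ force $\partial_T\A_k=\partial_\vp\A_k=0$. The only genuinely non-formal step is the Poisson commutation $\{H,A\}=0$, which reduces to the fact that $H$ is built from an $S^1$-invariant metric; this is the ingredient that prevents $\A_k$ from having spurious critical points where $H$ or $A$ satisfy only the averaged conditions.
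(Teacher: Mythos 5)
Your proposal is correct and follows essentially the same route as the paper: the same first-variation computation yielding $\int_0^1\omega(\xi,\dot y - TX_H+\vp X_A)\,dt$, the same partial derivatives in $T$ and $\vp$, and the same upgrade from averaged to pointwise constraints via conservation of $H$ and $A$ along $y$. The only difference is cosmetic: you make explicit the Poisson commutation $\{H,A\}=0$ coming from $S^1$-invariance of $g^\theta$, which the paper states only implicitly when asserting that $H$ and $A$ are constant along $y$.
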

\begin{proof}
  Let $s \mapsto u_s
  \in C^\infty(S^1,T^*E)$ be a differentiable curve with $u_0=y$ and
  $$\xi:=\ \frac{d}{ds} \Big |_{s=0}\, u_s.$$  
  Abbreviate the Hamiltonian $\widehat H :=
  TH_k-\vp A_1$ and use 
  $$\omega(\ps u,\pt u) = (d\lambda)(\ps u,\pt u) =
  \ps \lambda(\pt u) - \pt \lambda(\ps u)$$ 
  to conclude that
  \begin{align*}
    d\A_k(y)[\xi] &= \int_0^1 \omega(\xi,\dot y) - \int_0^1 d\widehat H(\xi)\, d t\\
    &=\int_0^1 \omega(\xi,\dot y) + \int_0^1 \omega(X_{\widehat H}(y),\xi)\, dt =\int_0^1\omega(\xi, \dot y - X_{\widehat H}(y))\, dt\,.
  \end{align*}
  
  If $(y,T,\vp)$ solves~\eqref{eq:III}, then clearly $d\A_k(y)[\xi]=0$ for all $\xi$. On the other hand, if 
  $d\A_k(y)[\xi]=0$ for all $\xi$, then by the fundamental lemma of calculus of variations and by non-degeneracy of $\omega$ 
  the curve $y$ has to solve the first equation in~\eqref{eq:III}. Differentiating $\A_k$ in direction $T$ and
  $\vp$ shows that 
  $$\frac{\partial \A_k}{\partial T} (y,T,\vp) = -\int_0^1 H_k(y), \qquad \frac{\partial \A_k}{\partial \vp} (y,T,\vp) = \int_0^1 A_1(y)\, .$$
  
  Now it is clear that $\partial \A_k/\partial T(y,T,\vp)=\partial
  \A_k/\partial \vp (y,T,\vp)=0$ if $(y,T,\vp)$ is a solution
  of~\eqref{eq:III}. On the other hand, if $(y,T,\vp)$ is a critical
  point of $\A_k$ then $H$ and $A$ are constant along $y$ and hence $H(y)=k$ and $A(y)=1$ as required.
\end{proof}

The functional $\A_k$ in \eqref{functionalak} can be thought of as the classical Rabinowitz action functional (c.f. \cite{Albers:2010vk,Rabinowitz:1978ts,Rabinowitz:1979}) with two Lagrange multipliers instead of only one and
fits precisely in the setting considered in \cite{Kang2013}, where Rabinowitz-Floer homology for contact coisotropic submanifolds is defined. Notice indeed that, in the setting of the lemma above, $\Sigma:=H^{-1}(k)\cap A^{-1}(1)$ is a 
coisotropic submanifold of $T^*E$ of codimension 2, for the Hamiltonians $H$ and $A$ Poisson-commute.
Therefore, it is not unreasonable to try to use Rabinowitz-Floer homology to infer existence results of critical points of the functional $\A_k$. However, this is very far from being a straightforward application of the results in 
\cite{Kang2013}. Indeed, the coisotropic submanifold $\Sigma$ is in general not of contact type (c.f. Section 3), even though all energy level sets of $H$ are trivially of contact type on $(T^*E,\omega)$.
Notice that the latter fact is in sharp contrast with what happens on $(T^*M,\bar \omega_\sigma)$, where very little is known about the contact property for energy level sets of the kinetic Hamiltonian.
In fact, low energy levels on surfaces different from the two-torus are known to be not of contact type, in case for instance $\sigma$ is an exact form (c.f. \cite[Theorem 1.1]{Contreras:2004lv}); it is however an open problem to determine whether such energy levels are stable or not. We refer to \cite{Hofer:1994bq} for the definition of stability and (for instance) to \cite[Corollary 8.4]{Abbondandolo:2013is} for the relation between the stability property and the existence of periodic orbits.
Anologously, one could ask whether the coisotropic submanifold $\Sigma$ is stable or not. This will be done in the next section. 

We finish this section noticing that we might not expect the existence of critical points of $\A_k$ for every $k$, as the example of the horocycle flow \cite{Hedlund:1932am} shows.



\section{Stability and contact property of coisotropic submanifolds}
\label{stabilityandcontactpropertyofcoisotropicsubmanifolds}

In the previous section we showed that, in order to prove the existence of solutions to  \eqref{eq:I}, it suffices to show the existence of 1-periodic orbits for the Hamiltonian flow defined by the Hamiltonian $T\cdot H - \varphi\cdot A:T^*E \rightarrow \R$, for some $T>0$, $\varphi \in \R$, and the standard symplectic form on $T^*E$ which are contained in the coisotropic 
submanifold $\Sigma:= H^{-1}(k)\cap A^{-1}(1)$ or, equivalently, to show the existence of critical points of the Rabinowitz-type action functional $\A_k$ given by \eqref{functionalak}.
In order to potentially apply the techniques developed in \cite{Kang2013} we first need to know that $\Sigma$ is of contact type or, at least, stable.

Let us first recall the notions of contact type, resp. stable coisotropic submanifold, which were introduced by Bolle in \cite{Bolle:1996,Bolle:1998}.
For examples of stable resp. contact type coisotropic submanifolds we refer to \cite{Kang2013}. Other examples in the setting considered in the present paper will be discussed in the next subsections.

\begin{dfn}
Let $(Y^{2m},\omega)$ be a symplectic manifold and let $H_0,...,H_{k-1}:Y\rightarrow \R$ be Poisson-commuting Hamiltonians such that zero is a regular value for each function and such that the intersection of the zero-energy level sets of $H_0,...,H_{k-1}$
$$\Sigma := \bigcap_{j=0}^{k-1} H^{-1}_j(0)$$
is cut-out transversely. Then $\Sigma$ is a $(2m-k)$-dimensional coisotropic submanifold. The coisotropic submanifold $\Sigma$ is called $\mathsf{stable}$ if there exist
one-forms $\alpha_0,...,\alpha_{k-1}$ such that $\ker \omega_\Sigma\subseteq \ker \alpha_j$, for all $j=0,...,k-1$, and
$$\alpha_0\wedge ... \wedge \alpha_{k-1}\wedge  \omega_\Sigma^{2(m-k)} \neq 0$$
everywhere on $\Sigma$, where $\omega_\Sigma$ denotes the restriction of $\omega$ to $\Sigma$. We say that $\Sigma$ is $\mathsf{of \ contact \ type}$ if the stabilizing forms 
$\alpha_0,...,\alpha_{k-1}$ can be chosen within the set of all primitives of $\omega_\Sigma$.
\end{dfn}

Obviously a necessary condition for $\Sigma$ to be contact is that the restricted symplectic form $\omega_\Sigma$ is exact. Furthermore, being of contact type for closed coisotropic submanifolds of codimension higher than one is also topologically obstructed. 
\begin{lem}
\label{obstruction}
If $\Sigma$ is contact, then $\dim H^1(\Sigma,\R) \geq k$. 
\end{lem}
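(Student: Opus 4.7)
The plan is to read off $k$ independent classes in $H^1(\Sigma,\R)$ from the contact-type data. Since each $\alpha_j$ is a primitive of $\omega_\Sigma$, the $k-1$ differences $\beta_j:=\alpha_j-\alpha_0$, $j=1,\dots,k-1$, are closed $1$-forms and give natural candidate classes $[\beta_j]\in H^1(\Sigma,\R)$; the geometric input that will do the work is the contact-type volume condition combined with the fact that the restricted form $\omega_\Sigma$ has corank $k$.

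A preliminary algebraic observation, obtained by expanding $\alpha_j=\alpha_0+\beta_j$ and using $\alpha_0\wedge\alpha_0=0$, simplifies the top-degree form:
\[
\alpha_0\wedge\alpha_1\wedge\cdots\wedge\alpha_{k-1}\wedge\omega_\Sigma^{m-k}=\alpha_0\wedge\beta_1\wedge\cdots\wedge\beta_{k-1}\wedge\omega_\Sigma^{m-k}.
\]
By the stability condition this is a volume form, so its integral over the closed manifold $\Sigma$ is non-zero. I will exploit this to rule out any linear relation among the $[\beta_j]$. Suppose for contradiction that $\sum_{j=1}^{k-1} c_j\beta_j=df$ with some $c_\ell\neq 0$; solving for $\beta_\ell$ and substituting into the volume form, every term containing a repeated $\beta_j$ kills itself, and one is left with a single summand proportional to $\alpha_0\wedge\beta_1\wedge\cdots\wedge\widehat{\beta_\ell}\wedge\cdots\wedge\beta_{k-1}\wedge df\wedge\omega_\Sigma^{m-k}$.

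The decisive step is then an integration by parts: this summand is a piece of
\[
d\bigl(f\cdot\alpha_0\wedge\beta_1\wedge\cdots\wedge\widehat{\beta_\ell}\wedge\cdots\wedge\beta_{k-1}\wedge\omega_\Sigma^{m-k}\bigr),
\]
and the remaining contribution, using $d\alpha_0=\omega_\Sigma$ and $d\beta_j=0$, reduces to $f\cdot\omega_\Sigma^{m-k+1}\wedge\beta_1\wedge\cdots\wedge\widehat{\beta_\ell}\wedge\cdots\wedge\beta_{k-1}$. Here enters the essential coisotropic fact: since $\omega_\Sigma$ has corank $k$, the power $\omega_\Sigma^{m-k+1}$ vanishes identically. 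Stokes' theorem on the closed manifold $\Sigma$ then forces the integral to be zero, contradicting the non-vanishing of the volume. This establishes $\dim H^1(\Sigma,\R)\geq k-1$.

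The main obstacle, and the step requiring most care, is producing the last independent class: the differences exhaust the cohomological information extractable from pairwise $\alpha_i-\alpha_j$, while $\alpha_0$ itself is not closed. The natural plan is to exploit that the same vanishing $\omega_\Sigma^{m-k+1}=0$ makes $\alpha_0\wedge\omega_\Sigma^{m-k}$ a \emph{closed} $(2m-2k+1)$-form whose non-triviality in $H^{2m-2k+1}(\Sigma,\R)$ follows from pairing it against $[\beta_1]\cup\cdots\cup[\beta_{k-1}]$, precisely the integral already computed. Invoking Poincar\'e duality on the closed oriented manifold $\Sigma$ and analysing the resulting multiplicative structure $H^1\otimes H^{k-1}\to H^k$ should promote this non-triviality to an additional $1$-dimensional class linearly independent from the $[\beta_j]$'s, yielding $\dim H^1(\Sigma,\R)\geq k$.
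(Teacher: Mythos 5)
Your reduction to the differences $\beta_j=\alpha_j-\alpha_0$ is the right move, and the portion of your argument that proves $\dim H^1(\Sigma,\R)\ge k-1$ is correct. Your route there is genuinely different from the paper's: after substituting the hypothetical relation $\sum c_j\beta_j=df$ into the volume form, the paper simply evaluates the resulting form $df\wedge(\cdots)$ at a critical point of $f$ (which exists because $\Sigma$ is closed) to see it is not a volume form, whereas you integrate by parts and invoke Stokes together with $\omega_\Sigma^{m-k+1}=0$. Both work; your version has the side benefit of never pretending that the non-closed forms $\alpha_j$ define cohomology classes, which is what the paper's phrase ``$\lambda_0[\alpha_0]+\dots+\lambda_{k-1}[\alpha_{k-1}]=0$'' implicitly does (the $\alpha_j$ satisfy $d\alpha_j=\omega_\Sigma\neq0$, so only the $k-1$ differences are closed; this is precisely where an off-by-one enters the paper's own proof). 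You also use the correct exponent $m-k$ where the paper's definition has the typo $2(m-k)$.

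The genuine gap is your last step, and it is not a matter of missing details: the $k$-th independent class does not exist in general, so no Poincar\'e-duality argument can produce it. Concretely, in $\C^m$ ($m\ge3$) take $H_0=|z_1|^2-r_1^2$ and $H_1=\sum_{j\ge2}|z_j|^2-r^2$, so that $\Sigma\cong S^1\times S^{2m-3}$ is coisotropic of codimension $k=2$; with $\lambda$ the standard radial primitive of $\omega$ and $\theta_1$ the angle of $z_1$, the forms $\alpha_0=\lambda|_\Sigma$ and $\alpha_1=\lambda|_\Sigma+d\theta_1$ are both primitives of $\omega_\Sigma$ and $\alpha_0\wedge\alpha_1\wedge\omega_\Sigma^{m-2}=\pm\,d\theta_1\wedge\lambda'\wedge(\omega')^{m-2}$ (primes denoting the $z_2,\dots,z_m$ part) is nowhere zero, so $\Sigma$ is of contact type; yet $\dim H^1(\Sigma,\R)=1<k$. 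Your pairing only shows that $[\alpha_0\wedge\omega_\Sigma^{m-k}]$ and $[\beta_1\wedge\dots\wedge\beta_{k-1}]$ are each nonzero; in this example the Poincar\'e dual of the former is proportional to $[\beta_1]$, so no new class in $H^1$ appears. The bound $\dim H^1(\Sigma,\R)\ge k-1$ that you actually prove is the sharp one, and it is also the only one the paper uses downstream (in codimension two the obstruction is stated there as $H^1(\Sigma,\R)\neq0$); the ``$\ge k$'' in the statement cannot be rescued.
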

\begin{proof}
Suppose by contradiction that $\Sigma$ is contact and $\dim H^1(\Sigma,\R) <k$. Let $\alpha_0,...,\alpha_{k-1}$ be primitives of $\omega_\Sigma$ satisfying the requirements of the definition 
above. By assumption the $k$ cohomology classes $[\alpha_0],...,[\alpha_{k-1}]$ are linearly dependent, that is there exists coefficients $\lambda_0,...,\lambda_{k-1}\in \R$ not all equal to zero such that 
$$\lambda_0 [\alpha_0]+...+\lambda_{k-1}[\alpha_{k-1}]=0.$$
Without loss of generality we assume that $\lambda_0=1$. The equation above means that 
$$\alpha_0 = df- \lambda_1 \alpha_1 - \dots - \lambda_{k-1} \alpha_{k-1}\,,$$
for some function $f:\Sigma\rightarrow \R$. Therefore 
\[
\alpha_0\wedge ... \wedge \alpha_{k-1}\wedge  \omega_\Sigma^{2(m-k)} = df \wedge \alpha_1 \wedge \dots \wedge \alpha_{k-1}\wedge \omega_\Sigma^{2(m-k)}\,,\]
which vanishes at every critical point of $f$. Since $\Sigma$ is closed, $f$ has at least a critical point and hence we conclude that the form 
$$\alpha_0\wedge ... \wedge \alpha_{k-1}\wedge  \omega_\Sigma^{2(m-k)}$$
is never a volume form on $\Sigma.$
\end{proof}
%

\subsection{Coisotropic submanifolds arising via symplectic reduction.}
In the case we are interested in, i.e. when $\Sigma =H^{-1}(k)\cap A^{-1}(1)$ is a codimension two coisotropic submanifold of $T^*E$, we have that $\Sigma$ is stable if
there exist one-forms $\alpha_0,\alpha_1$ on $\Sigma$ such that $\ker \omega_\Sigma \subseteq \ker d\alpha_i$,
for $i=0,1$, and 
\begin{equation}
\alpha_0\wedge \alpha_1 \wedge \omega_\Sigma^{n} \neq 0.
\label{stability}
\end{equation}
Here $n$ denotes the dimension of $M$. Recall that $\Sigma$ is said of \textit{contact type} if the stabilizing one-forms $\alpha_0$ and $\alpha_1$
are primitives of $\omega_\Sigma$ and the obstruction to the contact type condition as discussed in Lemma \ref{obstruction} reads: $H^1(\Sigma,\R)\neq 0$.

In what follows we write $\Sigma=\Sigma_0\cap \Sigma_1$, where 
$$\Sigma_0 := H_k^{-1}(0), \qquad \Sigma_1:= A_1^{-1}(0),$$
with $H_k$ and $A_1$ are as in the statement of Lemma \ref{lem:A}, and denote with $X_0$ and $X_1$ the Hamiltonian vector fields of $H_k$ and $A_1$ respectively. The following lemma provides a criterion for the contact property of $\Sigma$ in terms of the Hamiltonian vector fields $X_0$ and $X_1$. A similar statement holds clearly also for the stability condition.

\begin{lem}
The following facts are equivalent:
\begin{enumerate}[1.]
\item $\Sigma$ is of contact type.
\item There exist primitives $\alpha_0,\alpha_1$ of $\omega_\Sigma$ such that the following matrix in non-singular on $\Sigma$:
\begin{equation}\label{eq:matrix}
\left (\begin{matrix} \alpha_0(X_0) & \alpha_0(X_1) \\ \alpha_1(X_0) & \alpha_1(X_1)\end{matrix}\right ).
\end{equation}
\end{enumerate}
\label{lemcondition}
\end{lem}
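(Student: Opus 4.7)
The plan is to reduce the equivalence to a pointwise linear-algebraic computation, once the characteristic distribution $\ker \omega_\Sigma$ is identified with $\mathrm{span}(X_0,X_1)|_\Sigma$.

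First I would establish the identification $\ker \omega_\Sigma|_p = \mathrm{span}(X_0(p),X_1(p))$ at every $p \in \Sigma$. Since $\{H_k,A_1\} = 0$, both $X_0$ and $X_1$ are tangent to $\Sigma_0$ and $\Sigma_1$, and hence to $\Sigma$. For any $v \in T_p\Sigma$, using $i_{X_0}\omega = -dH_k$ and the fact that $H_k$ is constant on $\Sigma$ yields $\omega(X_0,v) = -dH_k(v)=0$, and analogously for $X_1$; thus $X_0, X_1 \in \ker \omega_\Sigma$. The transverse intersection $\Sigma_0 \pitchfork \Sigma_1$ keeps $dH_k$ and $dA_1$, and therefore $X_0,X_1$, linearly independent on $\Sigma$. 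Because $\Sigma$ is coisotropic of codimension $2$ in $T^*E$, $\dim \ker\omega_\Sigma = 2$, so $X_0,X_1$ span.

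Next, fix primitives $\alpha_0,\alpha_1$ of $\omega_\Sigma$ and a point $p \in \Sigma$. Choose a complement $V \subset T_p\Sigma$ with $T_p\Sigma = \mathrm{span}(X_0,X_1)\oplus V$; by the first step $\omega_\Sigma|_V$ is symplectic of rank $2n-2$. Pick any basis $e_1,\dots,e_{2n-2}$ of $V$. Expanding
\[
(\alpha_0\wedge\alpha_1\wedge\omega_\Sigma^{n-1})(X_0,X_1,e_1,\dots,e_{2n-2})
\]
as a sum over shuffles and using $i_{X_0}\omega_\Sigma = i_{X_1}\omega_\Sigma = 0$, every term in which $\omega_\Sigma^{n-1}$ absorbs either $X_0$ or $X_1$ vanishes. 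The unique surviving shuffle gives
\[
\det M_p \cdot \omega_\Sigma^{n-1}(e_1,\dots,e_{2n-2}),
\]
where $M_p$ denotes the matrix \eqref{eq:matrix} evaluated at $p$. Since $\omega_\Sigma^{n-1}|_V$ is a volume form on $V$, the second factor is nonzero, and therefore the wedge is nonzero if and only if $\det M_p \neq 0$.

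Applying this equivalence pointwise over $\Sigma$ with the \emph{same} choice of primitives proves 1$\Leftrightarrow$2: the nondegeneracy condition defining contact type is literally nonvanishing of $\det M$ on $\Sigma$. The argument is essentially linear algebra, and I do not foresee a genuine obstacle; the only delicate point is the shuffle computation showing that all mixed terms vanish, which is immediate once $X_0,X_1$ are recognized to lie in $\ker\omega_\Sigma$.
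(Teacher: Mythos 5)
Your argument is correct and follows essentially the same route as the paper's (very terse) proof: identify $\ker\omega_\Sigma$ with $\mathrm{span}(X_0,X_1)$ and reduce nonvanishing of $\alpha_0\wedge\alpha_1\wedge\omega_\Sigma^{n-1}$ to nonsingularity of the matrix by contracting with $X_0$ and $X_1$. You merely spell out the two steps the paper leaves implicit (the verification that $X_0,X_1$ span the characteristic distribution, and the shuffle computation isolating $\det M_p$), and your exponent $n-1$ is in fact the dimensionally correct one.
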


\begin{proof}
The two-form $\omega_\Sigma$ has kernel on $\Sigma$ generated exactly by the Hamiltonian vector fields $X_0$ and $X_1$. In particular, the matrix in $(2)$ is non-singular everywhere on $\Sigma$ if and 
only if the contraction of the form in \eqref{stability} by $X_0$ and $X_1$ is non-zero on the complement of $\ker \omega_\Sigma$.
\end{proof}

In what follows we denote by $\bar \Sigma := \bar H^{-1}(\bar k) =\Sigma /S^1$ the quotient of $\Sigma$ with respect to the $S^1$-action on $T^*E$. 

\begin{lem}
The following statements hold: 
\begin{enumerate}[1.]
\item If $\bar \Sigma$ is of contact type in $(T^*M,\bar \omega_\sigma)$, then $\Sigma$ is of contact type in $(T^*E,\omega)$.
\item\label{nm:stable} The hypersurface $\bar \Sigma$ is stable in $(T^*M,\bar \omega_\sigma)$ if and only if $\Sigma$ is stable in $(T^*E,\omega)$.
\end{enumerate}
\label{lem:correspondence}
\end{lem}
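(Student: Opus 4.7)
The proof exploits the reduction map $\pr := \Pi_1|_\Sigma\colon \Sigma \to \bar\Sigma$ from Proposition~\ref{prp:red}, for which Lemma~\ref{lem:sympreduction} yields $d\pr(X_0) = X_{\bar H}^\sigma$ and $d\pr(X_1) = 0$, so that $\pr^* \bar\omega_{\bar\Sigma} = \omega_\Sigma$. A second key ingredient is the Liouville form $\lambda$ on $T^*E$: restricted to $\Sigma$ it is a primitive of $\omega_\Sigma$ satisfying $\iota^*\lambda(X_1) = A = 1$ identically on $\Sigma$.

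For part (1) and the ``$\Rightarrow$'' direction of (2), pick a $\bar\alpha$ on $\bar\Sigma$ that is a contact form (respectively, a stabilizer) and set
\[
\alpha_0 := \pr^*\bar\alpha, \qquad \alpha_1 := \iota^*\lambda.
\]
In the contact case both are primitives of $\omega_\Sigma$ by Proposition~\ref{prp:red}; in the stable case $d\alpha_1 = \omega_\Sigma$ still kills $\ker \omega_\Sigma$ for free, while the kernel-killing property of $d\bar\alpha$ on $X_{\bar H}^\sigma$ pulls back through $d\pr$ to the kernel-killing property of $d\alpha_0$ on both $X_0$ and $X_1$. In either case, the matrix in Lemma~\ref{lemcondition} is upper triangular with diagonal $(\bar\alpha(X_{\bar H}^\sigma),\, 1)$, so its determinant $\bar\alpha(X_{\bar H}^\sigma)$ is nonzero by hypothesis.

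For the converse direction of (2), start with stabilizing forms $(\alpha_0, \alpha_1)$ on $\Sigma$ and average them over the free $S^1$-action to obtain $S^1$-invariant forms $\alpha_i^{\mathrm{av}}$; since $X_0$ and $X_1$ are themselves $S^1$-invariant, the kernel condition is preserved. Using the $S^1$-connection $\eta := \iota^*\pi^*\theta$ on the principal bundle $\pr \colon \Sigma \to \bar\Sigma$ (well-defined because $\theta(Z) = 1$), decompose
\[
\alpha_i^{\mathrm{av}} = \pr^* \bar\gamma_i + f_i\, \eta, \qquad f_i := \alpha_i^{\mathrm{av}}(X_1).
\]
The Cartan formula $d\alpha_i^{\mathrm{av}}(X_0, X_1) = X_0(f_i) - X_1(\alpha_i^{\mathrm{av}}(X_0))$, combined with $[X_0, X_1] = 0$ (Poisson-commutation of $H$ and $A$), the $S^1$-invariance of $\alpha_i^{\mathrm{av}}(X_0)$, and the kernel condition, forces $X_0(f_i) = 0$. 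Hence the $f_i$ descend to $X_{\bar H}^\sigma$-invariant functions on $\bar\Sigma$, and $\bar\gamma_0, \bar\gamma_1$ are genuine 1-forms on $\bar\Sigma$ for which a direct computation yields the basic identity
\[
\pr^*\bigl(\iota_{X_{\bar H}^\sigma} d\bar\gamma_i\bigr) = \eta(X_0)\, df_i - f_i \,\iota_{X_0} d\eta.
\]
The candidate stabilizer of $\bar\Sigma$ is a combination $\bar\alpha := c_0 \bar\gamma_0 + c_1 \bar\gamma_1$ with $X_{\bar H}^\sigma$-invariant coefficients $c_i$, chosen to cancel the right-hand side.

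The principal obstacle is the non-vanishing condition $\bar\alpha(X_{\bar H}^\sigma) \neq 0$, because $S^1$-averaging can destroy the pointwise non-singularity of $(\alpha_i(X_j))$ (as happens, for instance, when the matrix ``rotates'' around an $S^1$-orbit). To handle this, we use that $\det(\alpha_i(X_j))$ has constant sign on each connected component of $\Sigma$ and replace the naive $S^1$-average by a weighted fiber integration designed so that $\bar\alpha(X_{\bar H}^\sigma)$ comes out proportional to the fiber integral of $\det(\alpha_i(X_j))$, hence strictly positive. The vanishing of $\iota_{X_{\bar H}^\sigma} d\bar\alpha$ then follows from the curvature identity $\iota_{X_0}d\eta = (\tau\circ\pi)^*\sigma(X_0, \cdot)$ combined with the $X_{\bar H}^\sigma$-invariance of both the $f_i$ and the $c_i$. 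The bulk of the proof consists in these verifications.
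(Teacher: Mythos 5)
Your part (1) and the forward implication of part (2) are correct and essentially the paper's own argument: the paper takes $\alpha_0$ to be the restricted Liouville form and $\alpha_1=\pr^*\bar\alpha$, so your matrix is the transpose of theirs, and in either case the determinant is $\pm\,\bar\alpha(\bar X)\neq 0$ while the kernel conditions pull back exactly as you describe. The problem lies entirely in the converse implication of (2), where your argument is both heavier than necessary and incomplete at the decisive point.

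After averaging you only record $X_0(f_i)=0$, but Cartan's formula gives much more: since $\alpha_i^{\mathrm{av}}$ is $S^1$-invariant and $\imath_{X_1}d\alpha_i^{\mathrm{av}}=0$ (the kernel condition survives averaging because $\phi_1^t$ preserves $\ker\omega_\Sigma$), one gets $0=\mathcal L_{X_1}\alpha_i^{\mathrm{av}}=d\bigl(\imath_{X_1}\alpha_i^{\mathrm{av}}\bigr)=df_i$, so the functions $f_i=\alpha_i^{\mathrm{av}}(X_1)$ are \emph{constants} $b_i$. This is the paper's key observation: the single combination $b_1\alpha_0^{\mathrm{av}}-b_0\alpha_1^{\mathrm{av}}$ is then invariant and vanishes on $X_1$, hence descends to a form $\bar\beta$ on $\bar\Sigma$ with $\bar\beta(\bar X)=b_1\alpha_0^{\mathrm{av}}(X_0)-b_0\alpha_1^{\mathrm{av}}(X_0)=\det\bigl(\alpha_i^{\mathrm{av}}(X_j)\bigr)$, and no connection form $\eta$, curvature identity, or choice of $X_{\bar H}^\sigma$-invariant coefficients $c_i$ is needed. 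Your substitute for this step --- a ``weighted fiber integration'' arranged so that $\bar\alpha(\bar X)$ comes out proportional to the fiber integral of $\det(\alpha_i(X_j))$ --- is not carried out and does not obviously exist: a non-constant weight destroys the $S^1$-invariance of the averaged forms, and with it the very Cartan-formula step that forces $df_i=0$ and allows the form to descend; moreover $\det$ is quadratic in the matrix entries, so no linear weighting of the $\alpha_i$ can produce the fiber integral of the determinant. To your credit, the issue you are circling --- that the orbit-average of a family of matrices with everywhere-positive determinant can be singular --- is a genuine subtlety which the paper itself only asserts away (it claims without proof that the averaged pair still satisfies the volume-form condition); but as written your proposal neither closes that hole nor completes the descent construction, so the converse direction remains unproved.
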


\begin{proof}
\begin{enumerate}[1.]
\item Let $\bar \alpha$ be a contact form for $\bar \Sigma$ and consider $\alpha_1:=\pi^*\bar \alpha$, $\alpha_0=\lambda_\Sigma$ restriction to $\Sigma$ of the Liouville 1-form on $T^*E$. By definition we have 
$$\omega_\Sigma=\pi^*\bar \omega_{\sigma}|_{\bar \Sigma} = \pi^* d\bar \alpha = d\alpha_1=d\alpha_0.$$
By contruction we have $d\pi X_0 = \bar X$, $d\pi X_1=0$, where $\bar X$ denotes the Hamiltonian vector field defined by the kinetic Hamiltonian and the twisted symplectic form on $T^*M$. It follows by the 
contact condition that
$$\alpha_0(X_1)\equiv 1, \quad \alpha_1(X_0)= \bar \alpha (\bar X)\neq 0,\quad \alpha_1(X_1) =\bar \alpha (0)=0,$$
and hence the matrix in Lemma \ref{lemcondition} is nowhere singular on $\Sigma$.
\vspace{2mm}
\item Suppose now that $\bar \Sigma$ is stable with stabilizing form $\bar \alpha$ and consider the one-forms $\alpha_0,\alpha_1$ on $\Sigma$ as above. It suffices to show that $\ker \omega_\Sigma\subseteq \ker d\alpha_1$. 
By the stability property of $\bar \Sigma$ we know that any vector $v\in \ker \omega_\Sigma$ projects to a vector in $\ker d\bar \alpha$, since $\bar v := d\pi v \in \ker \bar \omega_\sigma|_{\bar \Sigma}\subseteq \ker d\bar \alpha.$
It follows that for all $w\in T\Sigma$ we have
$$(d\alpha_1)(v,w) = d\pi^*\bar \alpha (v,w) = \pi^*d\bar \alpha(v,w) = d\bar \alpha(\bar v, \bar w) =0$$
and hence $v\in \ker d\alpha_1$. Conversely, suppose that $\Sigma$ is stable and let $\beta_0,\beta_1$ be a stabilizing pair for $\Sigma$. Starting from $\beta_0,\beta_1$ we define a new stabilizing pair $\beta_0',\beta_1'$ 
for $\Sigma$ which is invariant under the flow of $X_1$ (denoted by $\phi_1^t$) by
$$\beta_i'(v) := \int_0^1 (\phi_1^t)^*\beta_i[v]\, dt, \quad \forall v\in T_p\Sigma, \ p\in \Sigma, \ i=0,1.$$
Since 
$$d\beta_i' = \int_0^1 (\phi_1^t)^*d\beta_i \, dt$$
and $\phi_1$ preserves $\ker \omega_\Sigma$ (since it preserves $\omega_\Sigma$), we have that $\ker \omega_\Sigma \subseteq \ker d\beta_i'$, for $i=0,1$. Moreover, since by assumption $\beta_0\wedge \beta_1\wedge \omega_\Sigma^{n-2}\neq 0$, we can conclude that $\beta_0'\wedge \beta_1'\wedge \omega_\Sigma^{n-2}\neq 0$. By construction we have $(\phi_1^t)^*\beta_i'=\beta_i'$ for all $t\in \R$, for $i=0,1$. Deriving in $t$ and 
evaluating at $t=0$ yields 
\begin{equation}\label{eq:magic}
0=\frac{d}{dt} (\phi_1^t)^*\beta_i' \Big |_{t=0} = \mathcal L_{X_1} \beta_i' = d(\imath_{X_1}\beta_i') + \imath_{X_1} d\beta_i' =d(\imath_{X_1}\beta_i').
\end{equation}
This shows that the functions $\beta_0'(X_1)$ and $\beta_1'(X_1)$ are constant along $\Sigma$. We set $b_0:=\beta_0'(X_1)$, $b_1:=\beta_1'(X_1)$, and denote by $\Pi:\Sigma \to \bar \Sigma$ the quotient map. Finally, we define a 1-form $\bar \beta$ implicitly via $\Pi^*\bar \beta = b_1 \beta_0' - b_0\beta_1'$, i.e.
$$\bar \beta_{\bar p}(\bar v):= b_1\cdot (\beta_0')_p(v) - b_0\cdot (\beta_1')_p(v),$$
for all $p\in \Sigma$ in the fibre over $\bar p$ and $v\in T_p\Sigma$ such that $d_p \Pi v= \bar v$. Notice that this is a good definition since $\beta_0'$ and $\beta_1'$ are $\phi_1^t$-invariant and by construction the right-hand side  vanishes on the kernel of $d\Pi$, which is spanned by the vector field $X_1$. Since $d\Pi X_0=\bar X$ we conclude 
\[\bar \beta(\bar X) = b_1 \beta'_0(X_0) - b_0 \beta'_1(X_0)=\det (\beta'_i(X_j))\neq 0\,,\]
which implies that that $\ker \bar \omega_\sigma |_{\bar \Sigma}\subseteq \ker d\bar \beta$. \qedhere
\end{enumerate}
\end{proof}

\begin{rem}
The contact condition for $\Sigma$ is in general weaker than the contact condition for $\bar \Sigma$ as the following example shows. 
Consider the flat torus $(\T^2,g)$ and let $\sigma$ be the area form induced by $g$. Then energy levels $\bar H^{-1}(\bar k)$ are stable in $(T^*\T^2, \bar \omega_\sigma)$ for every $\bar k>0$ with stabilizing 
form given by the angular form $d\theta$ but never of contact type, for the 2-form $\pi^*\sigma|_{\bar H^{-1}(\bar k)}$ is never exact (in fact, the map $\pi^*:H^2(\T^2)\rightarrow H^2(\bar H^{-1}(\bar k))$ is injective). 
However, the associated coisotropic submanifold $\Sigma$ in $T^*E$ is of contact type with contact forms given by $\alpha_0$ and $\alpha_1:=\alpha_0+\tau^*d\theta$, where $\alpha_0$ denotes the restriction of the Liouville 1-form 
to $\Sigma$.

Arguing as in the proof of Statement~\ref{nm:stable} in Lemma \ref{lem:correspondence} we see that $\bar \Sigma$ is of contact type in $(T^*M,\bar \omega_\sigma)$, provided that $\Sigma$ is of contact type in $(T^*E,\omega)$ and 
the constants $b_0,b_1$ satisfy $b_0+b_1=1$.
\end{rem}


\subsection{Examples.} 
From Lemma \ref{lem:correspondence} we deduce that all examples of stable resp. contact type hypersurfaces in $(T^*M,\bar\omega_\sigma)$ discussed in \cite{Cieliebak:2010zt} give rise to examples of stable, resp. contact type coisotropic submanifolds in $(T^*E,\omega)$. From \cite{Cieliebak:2010zt} we also get examples of non-stable coisotropic submanifolds.  We now explain another class of examples arising from homogeneous spaces. 

Let $G$ be a compact Lie group and $H \subset G$ a closed subgroup. Fix an $\Ad$-invariant inner product $\<\cdot,\cdot\>$ on the Lie algebra of $G$ and define a metric on $G$ via $\<v,w\>_q =\<dL_q^{-1} v,dL_q^{-1} w\>$ for all $q \in G$ and $v,w \in T_qG$, where $L_q:G\to G$, $g\mapsto qg$ denotes the left-multiplication. The group $H$ acts on $G$ by right-multiplication and we define a metric on the quotient $M:= G/H$ by requiring that the canonical projection $G \to M$ is a Riemannian submersion. Assume that there exists a closed normal  subgroup $H_0 \subset H$ such that $H/H_0 \cong S^1$ or equivalently that there exists a group homomorphism $H\cong H_0\times S^1$. As above, we obtain a metric on the quotient $E:=G/H_0$. Clearly the residual action of $H/H_0 \cong S^1$ descends to the quotient $E$ and thus $E$ is a circle bundle with base $M$. Let $Z$ denote the corresponding fundamental vector field on $E$. We assume without loss of generality that  $|Z|=1$ and observe that the metric on the $S^1$-bundle $E$ constructed in that way is of the form considered in Section~\ref{s:symplecticreduction}. As explained there, we obtain a connection form $\theta\in \Omega^1(E)$, the corresponding the curvature form $\sigma\in\Omega^2(M)$ as well as the twisted symplectic form $\bar \omega_\sigma=\bar \omega + \bar \pi^*\sigma$ for $T^*M$. 

To infer stability, in the next lemma we will need an additional regularity assumption that we now explain, even though we believe that this assumption could be dropped. Let $\g$, $\h$ and $\h_0$ denote the Lie algebras of the Lie groups $G$, $H$ and $H_0$ respectively. The embeddings $H_0 \subset H \subset G$ induce the inclusions $\h_0 \subset \h \subset \g$. Let $\zeta \in \h$ be such that $|\zeta|=1$ and $\zeta \perp \h_0$, i.e.\ $\zeta$ is orthogonal to any element of $\h_0$. Recall that an element $p \in \g$ is called \emph{regular} if the adjoint map $\ad_p=[p,\cdot]:\g \to \g$ has maximal rank, i.e.\ $\rk \ad_p =\max_{p' \in \g} \rk \ad_{p'}$.  

\begin{lem}
Assume that, for a given $\bar k>0$, $S_{\bar k}:= \{\bar p + \zeta \in \g \ |\ \bar p \perp \h, |\bar p|^2 = 2\bar k\}$ 
contains only regular elements. Then 
$$\bar \Sigma_{\bar k} := \Big \{(\bar q,\bar p) \in T^*M \ \Big |\  |\bar p|^2 =2\bar k\Big \}$$
is stable in $(T^*M,\bar \omega_\sigma)$.
\end{lem}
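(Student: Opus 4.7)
My plan is to invoke Lemma~\ref{lem:correspondence}(\ref{nm:stable}) to reduce the claim to showing that the codimension-two coisotropic $\Sigma=H^{-1}(k)\cap A^{-1}(1)\subset T^*E$ is stable in $(T^*E,\omega)$, and then to exploit the homogeneous structure by lifting one further step along the $H_0$-symplectic reduction $T^*G\red_0 H_0\cong T^*E$. Using the left trivialization $T^*G\cong G\times\g^*$ together with the metric identification $\g^*\cong\g$, the preimage $\tilde\Sigma$ of $\Sigma$ becomes $G\times S_{\bar k}$, and bi-invariance of the metric makes the characteristic vector fields fully explicit in body coordinates: $X_H(q,p)=(qp,0)$ and $X_A(q,p)=(q\zeta,-[\zeta,p])$. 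In particular, trajectories along $\tilde\Sigma$ are simply left-translates of one-parameter subgroups $q_0\exp(tp_0)$ with frozen body momentum $p_0\in S_{\bar k}$.

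Working in these coordinates I would first test the natural candidates $\alpha_0:=\lambda|_\Sigma$ (Liouville) and $\alpha_1:=\pi^*\theta|_\Sigma$. A short calculation yields $\alpha_0(X_H)=2k$, $\alpha_0(X_A)=1$, $\alpha_1(X_H)=1$, $\alpha_1(X_A)=1$, so the matrix~\eqref{eq:matrix} has determinant $2\bar k\neq 0$; moreover $d\alpha_0=\omega_\Sigma$ (trivial kernel condition) and $\iota_{X_A}d\alpha_1|_{T\Sigma}=0$ because $d\tau Z=0$. The remaining condition $\iota_{X_H}d\alpha_1|_{T\Sigma}=0$ reduces, via Maurer--Cartan on $G$, to the pointwise identity $[\zeta,\bar p]=0$, which fails in general. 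I would therefore modify $\alpha_1$ by adding a $p$-dependent left-invariant 1-form $(qv,\dot p)\mapsto\langle c(p),v\rangle$, with $c(p)\in\g$ chosen pointwise so as to cancel the offending term $[\zeta,\bar p]$ while preserving non-degeneracy of~\eqref{eq:matrix}. This is where the regularity hypothesis enters decisively: every $p=\bar p+\zeta\in S_{\bar k}$ has centralizer $\z_p=\ker\ad_p$ of constant dimension (a Cartan subalgebra), $\ad_p$ restricts to an isomorphism $\z_p^\perp\to\z_p^\perp$, and the orthogonal splitting $\g=\z_p\oplus\z_p^\perp$ depends smoothly on $p$, so that $c(p)$ can be selected smoothly and globally on $S_{\bar k}$.

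Finally I would verify that the corrected $\alpha_0,\alpha_1$ are right-$H_0$-invariant and annihilate the $H_0$-infinitesimal directions, so that they descend to stabilizing 1-forms on $\Sigma\subset T^*E$; Lemma~\ref{lem:correspondence}(\ref{nm:stable}) then yields a stabilizing form on $\bar\Sigma_{\bar k}\subset T^*M$ (or one can apply directly the averaging construction from the proof of that lemma). The main obstacle I anticipate is precisely the smooth global choice of the correction $c(p)$: without regularity the dimension of $\z_p$ can jump along $S_{\bar k}$, and a continuous choice of $c(p)$ simultaneously killing the obstruction and keeping \eqref{eq:matrix} non-singular is then generally blocked. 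This is exactly the reason for the regularity assumption in the statement, which the authors nonetheless expect to be dispensable.
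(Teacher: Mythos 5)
Your overall strategy is the paper's: reduce to stability of $\Sigma=H^{-1}(k)\cap A^{-1}(1)$ in $(T^*E,\omega)$ via Statement~\ref{nm:stable} of Lemma~\ref{lem:correspondence}, take $\alpha_0$ to be the restricted Liouville form, and build the second stabilizing form out of the splitting $\g=\g_p\oplus\g_p^\perp$ together with the invertibility of $\ad_p$ on $\g_p^\perp$, the regularity hypothesis serving exactly to make this data vary smoothly along $S_{\bar k}$. Your matrix entries for the candidates $(\lambda|_\Sigma,\pi^*\theta|_\Sigma)$ and your identification of $[\zeta,\bar p]$ as the obstruction to $\iota_{X_H}d(\pi^*\theta)=0$ are both correct.

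The gap is in the shape of your correction. You restrict it to a form $(qv,\dot p)\mapsto\<c(p),v\>$ pairing only with the base direction $v$; write $\tilde c(p)=\zeta+c(p)$ for the total coefficient. Contracting $d\alpha_1$ with $X_H=(qp,0)$ against a tangent vector $(qv,\dot p)$ of $G\times S_{\bar k}$ produces \emph{two} independent families of conditions: the one you address, $\ad_p\tilde c(p)=0$, which forces $\tilde c(p)\in\g_p$ (e.g.\ $\tilde c(p)=\pi_{\g_p}\zeta$), but also $\<d\tilde c(\dot p),p\>=0$ for all $\dot p\in T_pS_{\bar k}$, a condition that appears precisely because your corrected coefficient now depends on $p$. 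For the natural choice $\tilde c(p)=\pi_{\g_p}\zeta$ one has $\<\tilde c(p),p\>=\<\zeta,p\>=1$ constant, hence $\<d\tilde c(\dot p),p\>=-\<\pi_{\g_p}\zeta,\dot p\>=\<\pi_{\g_p^\perp}\zeta,\dot p\>$, which does not vanish in general; and the one choice that does satisfy all the derivative conditions, $\tilde c(p)=p$, reproduces the Liouville form and renders the matrix \eqref{eq:matrix} singular. This is exactly why the stabilizing form in the paper carries a genuine fiber (vertical) component $\<\ad_p^{-1}\pi_{{\g_p}^\perp}\zeta,P\>$ in addition to the horizontal term $\<\pi_{\g_p}\zeta,Q\>$: the vertical piece is what absorbs the extra terms coming from differentiating the $p$-dependent coefficient. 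A similar omission occurs for $\iota_{X_A}d\alpha_1$, where your argument via $d\tau Z=0$ only covers the uncorrected $\pi^*\theta$. You therefore need to enlarge your ansatz to allow a $\dot p$-component; once you do, solving the resulting conditions leads essentially to the paper's explicit formula for $\alpha_1$.
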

\begin{proof}
By Lemma~\ref{lem:correspondence} it suffices to show that
 $$\Sigma_k:=\Big \{ (q,p) \in T^*E \ \Big |\ |p|^2 =2k, \ p(Z) = 1\Big \}$$ 
 is stable in $(T^*E,\omega)$, where $k=\bar k +\frac 12$. 
As first stabilizing form we choose $\alpha_0$, the restriction of the Liouville form in $T^*E$ to $\Sigma_k$. The definition of $\alpha_1$ requires instead a little preparation. 
The splitting $\g=\h_0 \oplus \h_0^\perp$ is preserved under the adjoint action of $H_0$. We have the well-known isomorphism 
\[
\phi:G \times_{H_0} \h_0^\perp \stackrel{
\cong}{\longrightarrow} TE,\qquad [g,v] \mapsto d\tau_0 dL_g v\,,
\]
where $\tau_0:G \to E$ denotes the quotient map, $L_g:G \to G$ the left-multiplication with the element $g$ and $G \times_{H_0} \h_0^\perp$ the associated bundle, for which $H_0$ acts on $G$ by right-multiplication and on $\h_0^\perp$ with the adjoint action. We identify $T^*E$ with $TE$ using the metric and see that by construction we have $
\phi^{-1}(\Sigma_k) = G \times_{H_0} S_k$. 

For each $(q,p) \in TE$ consider the splitting into horizontal and vertical space with respect to the Levi-Civita connection
\[T_{(q,p)} TE = T^\hor_{(q,p)} TE \oplus T^\mathrm{ver}_{(q,p)} TE\,.\]
Recall that both factors are canonically isomorphic to $T_q E$. 
For each $p \in \g$ denote by $\g_p := \ker \ad_p \subset \g$ and by $\pi_{\g_p}$, $\pi_{\g^\perp_p}$ the orthogonal projections to the respective subspaces. By construction $\ad_p$ is invertible on $\g^\perp_p$. We finally define $\alpha_1$
\[
(\alpha_1)_{[q,p]}(\phi([q,Q])^\hor+\phi([q,P])^\mathrm{ver}) = \<\pi_{\g_p} \zeta,Q\>+ \<\ad_p^{-1} \pi_{{\g_p}^\perp} \zeta,P\>\,,
\]
for all $Q,P \in \h_0^\perp$ and $[q,p] \in G \times_{H_0} S_k$, where $\cdot^\hor$ and $\cdot^\mathrm{ver}$ denotes the horizontal and vertical lift respectively. One verifies directly that the right-hand side has the correct invariance property and depends smoothly on $p \in S_k$, since by assumption $S_k$ only contains regular elements. By abuse of  notation we abbreviate by $(Q,P)$ the argument of $
\alpha_1$ in the followig discussion. 

We now check that the pair $(\alpha_0,\alpha_1)$ indeed defines a stable pair. Let $X_0$ and $X_1$ be the Hamiltonian vector fields of $H_k$ and $A_1$ respectively. We immediately verify that $\alpha_0(X_0)=|p|^2 = 2k$ and $\alpha_0(X_1)= \<p,Z\>=1$.  In the splitting, $X_0$ at $[q,p]$ is given by $(p,0)$ and we have
\begin{equation}\label{eq:alpha1}
(\alpha_1)_{[q,p]}(p,0) = \<\pi_{\g_p} \zeta,p\> = \<\zeta,p\>=|\zeta|^2 = 1\,.
\end{equation}
By definition $X_1$ is the generator of the lifted $S^1$-action on $T^*E \cong TE$ given by \emph{right}-multiplication with $t\mapsto \exp(t \zeta)$. So, in the splitting, $X_1$ at $[q,p]$ is given by $(\zeta, \ad_\zeta p)$. 
We compute
\begin{align*}
(\alpha_1)_{(q,\phi_q(p))}(\zeta,\ad_\zeta p) &= \<\pi_{\g_p} \zeta,\zeta\> + \<\ad_p^{-1} \pi_{g^\perp_p} \zeta, \ad_\zeta p\>= \<\pi_{\g_p} \zeta, \zeta\> + \< \pi_{\g^\perp_p} \zeta, \zeta\>= 1\,.
\end{align*}
We conclude that 
\[
\det \begin{pmatrix}
\alpha_0(X_0) &\alpha_0(X_1)\\
\alpha_1(X_0)&\alpha_1(X_1)
\end{pmatrix} = \det \begin{pmatrix}
2k&1\\ 
1&1
\end{pmatrix} = 2k-1 >0\,.
\]
It remains to check that $\ker \omega_{\Sigma} \subset \ker d\alpha_i$  or equivalently that $X_0,X_1 \in \ker d\alpha_1$ for $i=0,1$. Since $\alpha_0$ is a primitive of $\omega_\Sigma$ there is nothing to show for $i=0$. The right-hand side of~\eqref{eq:alpha1} is invariant not only under the adjoint action of $H_0$ but also under the action of $H$, for the one form $\alpha_1$ is invariant under the $S^1$-action. Because $\alpha_1(X_1)=1$ is constant and $\alpha_1$ is invariant, we obtain for free $(d\alpha_1)(X_1,\cdot)=0$ by a similar computation as in~\eqref{eq:magic}. We compute
\[
(d\alpha_1)_{[q,p]} ((Q_1,0),(Q_2,0))  = \<\pi_{\g_p} \zeta,\ad_{Q_1}Q_2\>\,,
\]
for any $Q_1,Q_2 \in \h_0^\perp$. In particular  since by definition $\pi_{\g_p} \ad_p Q_2 =0$ we have 
\[(d\alpha_1)_{[q,p]}((p,0),(Q_2,0)) = \<\pi_{\g_p} \zeta,\ad_p Q_2\> = 0\,.\qedhere
\]
\end{proof}


\section{The Lagrangian action functional $\S_k$}
\label{thelagrangianactionfunctional}

Unfortunately, the functional $\A_k$ defined in \eqref{functionalak} is not well-suited for finding critical points using classical Morse theory. In fact, the natural space over which it is defined - namely $H^{1/2}(S^1,T^*E)$ - does not have a good structure of an infinite dimensional manifold due to the fact that curves of class $H^{1/2}$ might have discontinuities.
Furthermore, the functional $\A_k$ turns out to be strongly indefinite, meaning that all its critical points have infinite Morse index and coindex.
Therefore, using the Legendre transform $\mathcal L:TE\to T^*E$, we introduce a related Lagrangian action functional $\S_k$ defined on the product Hilbert manifold $H^1(S^1,E)\times (0,+\infty)\times \R$,
whose critical points correspond to those of $\A_k$. Here $H^1(S^1,E)$ denotes the space of absolutely continuous loops $\gamma:S^1\rightarrow E$ with
square-integrable first derivative; it is well-known that $H^1(S^1,E)$ has a natural structure of Hilbert manifold (c.f. \cite{Abbondandolo:2009gg}) with Riemannian metric 
$g_{H^1}$ naturally induced by the metric $g^\alpha$. On $\mathcal M:= H^1(S^1,E)\times (0,+\infty)\times \R$ we then consider the product metric $g_{\mathcal M}=
g_{H^1}+dT^2+d\vp^2$. Observe that $(\mathcal M,g_{\mathcal M})$ is not complete. 

In the following we will prove the existence of critical points of $\S_k$ using variational methods, even though the functional $\S_k$ might fail to satisfy a crucial compactness property 
(namely the \textit{Palais-Smale condition}). To overcome this difficulty we will use a monotonicity argument, better known 
as the \textit{Struwe monotonicity argument}, which is originally due to Struwe \cite{Struwe:1990sd} and has been already successfully applied \cite{Abbondandolo:2013is,Asselle:2015ij,Asselle:2015sp,Asselle:2014hc,Asselle:2016qv,Contreras:2006yo} to the existence of closed magnetic geodesics.

We recall that the connected components of $\mathcal M$ are in one to one correspondence 
with the set of conjugacy classes in $\pi_1(E)$, for the canonical inclusions 
$$C^\infty (S^1,E)\hookrightarrow H^1(S^1,E)\hookrightarrow C^0(S^1,E)$$ 
are dense homotopy equivalences. Finally, we denote with $\mathcal M_0$ the connected component of $\mathcal M$ given by the contractible loops.


\subsection{The variational principle.} As in the previous sections we denote with $Z$ the fundamental vector field of the $S^1$-action on $E$. For fixed values of $T$ and
$\vp$ the Legendre transform $\mathcal L:TE\to T^*E$ of the Tonelli Hamiltonian $\widehat H:= TH-\varphi A$ yields the following Tonelli Lagrangian 
$$L_{T,\vp}:TE\to \R,\qquad L_{T,\vp}(q,v) = \frac{1}{2T}|v+\vp Z(q)|^2-\vp +kT,$$
where $k:=\bar k+\frac 12$, and an associated Lagrangian action functional $H^1(S^1,E)\to \R$,
$$\gamma \longmapsto \frac{1}{2T}\int_0^1 |\dot \gamma(t)+\vp Z(\gamma(t))|^2\, dt - \vp +kT.$$
By letting the values of $T$ and $\vp$ free we thus get a functional $\S_k:\mathcal M\rightarrow \R$,
\begin{equation}
\S_k(\gamma,T,\vp)= \frac{1}{2T}\int_0^1 |\dot \gamma(t)+\vp Z(\gamma(t))|^2 \, dt - \vp +kT.
\label{functionalsk}
\end{equation}

For sake of completeness we now verify that critical points of $\S_k$ project to $T$-periodic magnetic geodesics with energy $\bar k$.
In order to do that we need an auxiliary lemma. In what follows we denote with $\<\cdot,\cdot\>$ the metric $g^\alpha$ on $E$ as constructed in 
Section \ref{s:symplecticreduction} and with $\na$ the associated Levi-Civita connection.
\begin{lem}\label{lem:dalpha}
  For all $u,v\in TE$ we have 
  $$d\alpha(u,v) = 2\<\na_u Z,v\>.$$
\end{lem}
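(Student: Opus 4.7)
The plan is to rewrite $\alpha$ as the metric dual of $Z$ and then combine a standard identity relating $d$ and $\nabla$ with the fact that $Z$ is Killing for $g^\alpha$.

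First, observe that $\alpha$ is the metric dual of $Z$ with respect to $g^\alpha$. Indeed, since the splitting $TE=\ker \alpha\oplus \R\cdot Z$ is $g^\alpha$-orthogonal and $|Z|=1$, for every $v\in TE$ we have
\[
\<Z,v\> = (\tau^*g)(Z,v) + \alpha(Z)\alpha(v) = 0+\alpha(v)=\alpha(v)\,.
\]
Hence $\alpha=Z^\flat$.

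Second, $Z$ is a Killing vector field for $g^\alpha$: by construction $g^\alpha=\tau^*g+\alpha\otimes \alpha$ is $S^1$-invariant and $Z$ generates the $S^1$-action, so $\mathcal L_Z g^\alpha=0$. Equivalently, $u\mapsto \nabla_u Z$ is pointwise skew-symmetric, i.e.
\[
\<\nabla_u Z,v\> + \<\nabla_v Z, u\> = 0\,, \qquad \forall\, u,v\in TE\,.
\]

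Third, for any vector field $\xi$ with metric dual $\xi^\flat$, the torsion-freeness of $\nabla$ together with the formula $d\xi^\flat(u,v)=u(\xi^\flat v) - v(\xi^\flat u) - \xi^\flat([u,v])$ gives
\[
d\xi^\flat(u,v) = \<\nabla_u \xi,v\> - \<\nabla_v \xi, u\>\,.
\]
Applying this to $\xi=Z$ and using the Killing identity above to turn $-\<\nabla_v Z,u\>$ into $\<\nabla_u Z,v\>$ yields
\[
d\alpha(u,v) = \<\nabla_u Z,v\> - \<\nabla_v Z, u\> = 2\<\nabla_u Z, v\>\,,
\]
which is the desired identity. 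The only nontrivial ingredient is the Killing property, which here is immediate from the construction of $g^\alpha$; the rest is bookkeeping with the standard Koszul-type identity.
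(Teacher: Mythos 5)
Your proof is correct and follows essentially the same route as the paper's: both rest on the observation that $\alpha$ is the $g^\alpha$-dual of $Z$, the skew-symmetry of $u\mapsto \nabla_u Z$ coming from the $S^1$-invariance of the metric (i.e.\ $Z$ is Killing), and the standard formula for $d$ of a one-form. The paper merely phrases the last two steps via a two-parameter family of curves rather than the Koszul-type identity, which is a cosmetic difference.
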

\begin{proof}
  We denote by $\Phi$ the flow of $Z$. Consider
  $c(s,t)=\Phi^s\gamma(t)$ for some path $\gamma$ in $E$ with $\pt
  \gamma(0)=u$. Since by construction $\Phi^s$ is an
  isometry for each $s$, we have
  $$|\pt c(s,t)| =|d\Phi^s \pt \gamma(t)|=|\pt  \gamma(t)|,\quad \forall s\in \R. $$
  In particular
  \[0 = \frac 12 \ps |\pt c|^2 = \<\na_s \pt c,\pt c\> = \<\na_t \ps c,\pt c\>=\<\na_t Z,\pt c\>\,.\] 
  Thus $\<\na_u Z,u\>=0$, for all $u$. This
  shows that the tensor $(u,v) \mapsto \<\na_u Z,v\>$ is
  skewsymmetric.  Now let $(s,t) \mapsto c(s,t)$ be any map such
  that $\ps c(0)=u$ and $\pt c(0)=v$. We have $\alpha(\ps c)= \<\ps
  c,Z\>$. Deriving by $\pt$ gives
  \[
  \pt \alpha(\ps c) = \<\na_t \ps c,Z\> + \<\ps c,\na_t Z\>\,.
  \]
  Interchanging the role of $\ps$ and $\pt$ gives 
  $$\ps \alpha(\pt c)=\<\na_s\pt c,Z\>+ \<\pt c,\na_s Z\>.$$
  Finally, subtracting the two equations we get by skewsymmetry
  \begin{align*}
  d\alpha(\ps c,\pt c) &= \ps \alpha(\pt c) - \pt \alpha(\ps c)\\
                                 &= \<\na_s\pt c,Z\>+ \<\pt c,\na_s Z\> - \<\na_t \ps c,Z\> - \<\ps c,\na_t Z\>\\
                                 &= 2 \<\pt c,\na_s Z\>.\qedhere
  \end{align*}
\end{proof}

\begin{prop}
  If $(\gamma,T,\vp)\in \mathcal M$ is a critical point of $\S_k$ then the periodic curve $\mu:[0,T]\to M$ defined  by
  $\mu(t):= \tau\circ \gamma(t/T)$ for all $t\in [0,T]$ is a closed magnetic geodesic with energy $\bar k$. Conversely, for every $T$-periodic 
  closed magnetic geodesic $\mu:[0,T]\to M$ with energy $\bar k$ there exist $\gamma:S^1\to E$ and $\vp\in \R$ such that $\mu(\cdot)=\tau\circ \gamma(\cdot/T)$ and 
  $(\gamma,T,\vp)\in \mathcal M$ is a critical point of $\S_k$.
\end{prop}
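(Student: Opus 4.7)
The plan is to derive the Euler--Lagrange system for $\S_k$ by computing its first variation in each of the three arguments $\gamma$, $T$ and $\vp$, and then to match the resulting equations with the system~\eqref{eq:III}; the conclusion will then follow from Lemma~\ref{lem:sympreduction}. Conceptually, this amounts to observing that $L_{T,\vp}$ is the fibrewise Legendre dual of the Tonelli Hamiltonian $\widehat H = TH - \vp A$, so that the tautological identity $\S_k(\gamma,T,\vp) = \A_k(y,T,\vp)$ holds whenever $y$ is the Legendre lift of $(\gamma,\dot\gamma)$; one could therefore quote Lemma~\ref{lem:A} directly, but a direct computation keeps the discussion self-contained and is easier to reverse for the converse implication.

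The partial derivatives in $T$ and $\vp$ are elementary. Since $\partial_T L_{T,\vp}(q,v) = k - \frac{1}{2T^2}|v+\vp Z|^2$ and $\partial_\vp L_{T,\vp}(q,v) = \frac{1}{T}\langle v+\vp Z, Z\rangle - 1$, setting the associated averages to zero at a critical point yields the averaged versions of the pointwise conditions $H(y)\equiv k$ and $A(y)\equiv 1$ appearing in~\eqref{eq:III}. These averaged identities will be upgraded to pointwise ones once we know that $H$ and $A$ are preserved along $y$, and this will come as a consequence of the variation in $\gamma$.

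The variation in $\gamma$ is the main step. Taking a smooth family $\gamma_s$ with $\partial_s\gamma_s|_{s=0} = \xi$ and using the symmetry of the Levi-Civita connection, one obtains
\[
d_\gamma\S_k(\gamma,T,\vp)[\xi] = \frac{1}{T}\int_0^1 \bigl\langle \nabla_t\xi + \vp\,\nabla_\xi Z,\; \dot\gamma + \vp Z\bigr\rangle\, dt.
\]
The first summand is handled by a standard integration by parts; the second one is dealt with by invoking Lemma~\ref{lem:dalpha} together with the skew-symmetry of $d\alpha$, which yields $\langle \nabla_\xi Z,\dot\gamma+\vp Z\rangle = -\langle \nabla_{\dot\gamma+\vp Z}Z,\xi\rangle$. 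Hence the resulting Euler--Lagrange equation is
\[
\nabla_t(\dot\gamma+\vp Z) \;+\; \vp\,\nabla_{\dot\gamma+\vp Z}Z \;=\;0,
\]
which is precisely the Euler--Lagrange equation of $L_{T,\vp}$, and under the Legendre transform it becomes Hamilton's equation $\dot y = X_{\widehat H}(y)$. Since $H$ and $A$ Poisson-commute and are preserved by the flow of $\widehat H$, the averaged identities from the previous paragraph become pointwise, so that $(y,T,\vp)$ solves~\eqref{eq:III}. Lemma~\ref{lem:sympreduction} then implies that $\mu(t) := \tau\circ\gamma(t/T)$ is a $T$-periodic closed magnetic geodesic on $M$ with energy $\bar k$.

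For the converse, given a $T$-periodic closed magnetic geodesic $\mu$ with energy $\bar k$, Lemma~\ref{lem:sympreduction} produces a lift $x:\R\to T^*E$ satisfying~\eqref{eq:II}; the rescaling $y(t):= e^{-it\vp}x(tT)$ satisfies~\eqref{eq:III} for a suitable $\vp\in\R$, and setting $\gamma := \pi\circ y\in H^1(S^1,E)$ and reversing the computations above shows that $(\gamma,T,\vp)$ is a critical point of $\S_k$. The main obstacle throughout is the $\gamma$-variation, where the base-point dependence of the fundamental vector field $Z$ produces the non-trivial term $\vp\nabla_\xi Z$; Lemma~\ref{lem:dalpha} is precisely the tool that converts this term into the contribution needed for the equation of motion to take its standard Hamiltonian shape.
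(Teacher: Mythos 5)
Your proposal is correct, and it reaches the conclusion by a genuinely different route in its second half. The first-variation computations agree with the paper's: the partial derivatives in $T$ and $\vp$ give the averaged constraints, and the $\gamma$-variation, handled via integration by parts together with Lemma \ref{lem:dalpha} and the skew-symmetry of $(u,v)\mapsto\<\na_u Z,v\>$, gives your equation $\na_t(\dot\gamma+\vp Z)+\vp\,\na_{\dot\gamma+\vp Z}Z=0$, which (expanding and using $\na_Z Z=0$) is exactly the paper's \eqref{eq:dotgamma}. From there the paper stays entirely on the Lagrangian side: it shows $\<\dot\gamma,Z\>$ and $|\dot\gamma+\vp Z|^2$ are pointwise constant, hence equal to $T-\vp$ and $2T^2k$ respectively, then splits $\dot\gamma$ into horizontal and vertical components to compute $\tfrac12|\dot\mu|^2=\bar k$ and to convert \eqref{eq:dotgamma} into $\na_t\dot\mu=Y_\mu(\dot\mu)$ by explicit computation. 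You instead pass to the Hamiltonian picture via the Legendre transform, upgrade the averaged constraints to pointwise ones by conservation of $H$ and $A$ along the flow of $\widehat H$, and then invoke the chain \eqref{eq:III}$\,\to\,$\eqref{eq:II} together with Lemma \ref{lem:sympreduction}. Your route is shorter and reuses the machinery of Section \ref{s:symplecticreduction}; the paper's direct verification is self-contained and, as a by-product, produces the explicit identities (e.g.\ $\<\dot\gamma,Z\>=T-\vp$ and the horizontal/vertical splitting of $\dot\gamma$) that are reused in the Palais--Smale analysis of Section \ref{thelagrangianactionfunctional}. The only step you assert without detail is the standard Tonelli equivalence between the Euler--Lagrange flow of $L_{T,\vp}$ and the Hamiltonian flow of its Legendre dual $\widehat H$; this is unobjectionable at the paper's level of rigor (the paper itself appeals to the correspondence between critical points of $\S_k$ and $\A_k$ without proof), and your treatment of the converse direction is no less complete than the paper's, which also leaves it to a reversal of the computation.
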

\begin{proof}
  Consider a variation $s \mapsto \gamma_s \in H^1(S^1,E)$ with
  $\gamma:=\gamma_0$ and
  $$\xi:=\frac{d}{ds}\Big |_{s=0}\, \gamma_s.$$  
  Differentiating $\S_k$ in the $\gamma$-variable and evaluating at
  $\xi$ yields
  \begin{align*}
    d_\gamma \S_k(\gamma)[\xi] &= \frac 1 T \int_0^1\<\dot \gamma + \vp Z, \na_t \xi + \vp \na_\xi Z\>\, dt    \\
    & = \frac 1T \int_0^1 \Big [\<\dot \gamma,\na_t \xi\> +\vp \<Z,\na_t\xi\>+\vp\<\dot \gamma,\na_\xi Z\>+\vp^2\<Z,\na_\xi Z\>\Big ] dt\\
    &= \frac 1T \int_0^1 \Big [-\< \na_t \dot \gamma,\xi\> + \vp \big
    (\<\dot
    \gamma,\na_\xi Z\> -\< \na_t Z,\xi\>\big )\Big ] dt\\
    &= \frac 1T\int_0^1 \Big [-\< \na_t \dot \gamma,\xi\> + 2 \vp \<\dot \gamma,\na_\xi Z\>\Big ] dt\\
    &=\frac 1T \int_0^1 \Big [-\<\na_t \dot \gamma,\xi\> + \vp
    d\alpha(\xi,\dot \gamma)\Big ] dt,
  \end{align*}
  where in the third equality we have used integration by parts and the fact that 
  $$\<Z,\na_\xi Z\>=\frac 12 \frac{d}{ds}\Big|_{s=0} |Z|^2 =0,$$ 
  in the penultimate one the skewsymmetry of the tensor $\<\na_u
  Z,v\>$ and in the last one Lemma \ref{lem:dalpha}. As
  $(\gamma,T,\vp)$ is a critical point of $\S_k$, the above quantity
  has to vanish for every choice of $\xi$ and hence we conclude that
  \begin{equation}
    \label{eq:dotgamma}
  \<\na_t \dot \gamma,\cdot\> =  \vp d\alpha(\cdot,\dot \gamma).  
  \end{equation}
  Differentiating $\S_k$ in the $T$-direction yields
   \begin{equation}
   0=-\frac 1{2T^2} \int_0^1 |\dot \gamma + \vp Z|^2\,dt +k\,, \quad
   \Rightarrow \quad \int_0^1 |\dot \gamma+ \vp Z|^2\, dt =2T^2k,
   \label{dskdT}
  \end{equation}
  whilst differentiating $\S_k$ in the $\vp$-direction gives
  \begin{equation}
  0= \frac 1T \int_0^1 \<\dot \gamma+ \vp Z,Z\>\, dt -1\,, \quad
  \Rightarrow \quad \int_0^1 \<\dot \gamma,Z\>\, d t = T -\vp.
  \label{dskdvp}
  \end{equation}
  Now observe that by Lemma \ref{lem:dalpha} and \eqref{eq:dotgamma}
  $$\frac{d}{dt}\<\dot \gamma,Z\> = \<\na_t \dot \gamma,Z\> +\<\dot \gamma,\na_t Z\>=0;$$ 
 therefore $\<\dot \gamma,Z\>$ is constant and hence by \eqref{dskdvp} we have 
 \begin{equation}
 \<\dot \gamma,Z\>=T-\vp.
 \label{dskdvp2}
 \end{equation}
 Similarily using Lemma \ref{lem:dalpha} and equation
 \eqref{eq:dotgamma} we conclude that
 \begin{align*}
   \frac{d}{dt} \<\dot \gamma +\vp Z,\dot \gamma +\vp Z\> &= 2\<\na_t
   \dot \gamma,\dot \gamma\> + 2\vp \<\na_t\dot \gamma,Z\> +
   2\vp\<\dot \gamma,\na_t Z\> = 0\,.
					\end{align*}
 This together with \eqref{dskdT} shows that  
 \begin{equation}
 |\dot \gamma+\vp Z|^2=2T^2k.
 \label{dskdT2}
 \end{equation}
 Now set $\mu(t):=\tau(\gamma(t/T))$ and use the splitting
  $$\dot \gamma = \xi + \<\dot \gamma,Z\> Z = \xi + (T-\vp) Z,$$ 
  with $\xi \in \ker  \alpha$. By construction we have $d\tau( \dot \gamma)=d \tau (\xi) = T \dot \mu $; therefore   
  $$2T^2 k = |\dot \gamma+ \vp Z|^2 =|\xi + TZ|^2 = T^2 |\dot \mu|^2+T^2$$
  and hence $\frac 12 |\dot \mu|^2 = \bar k = k -\frac 12$. Now, by definition we 
  have $\na_t \dot \gamma = \na_{\dot \gamma}\dot \gamma$; inserting the splitting $\dot \gamma=\xi + (T-\vp)Z$ in both
  arguments yields by \eqref{eq:dotgamma}
  \[
  \vp d\alpha(u,\dot \gamma)=\<\na_t \dot \gamma,u\>= T^2\<\na_{\dot
    \mu}\dot \mu,\bar u\> + 2(T-\vp)\<\na_\xi Z,u\>\,,
  \]
  where $u \in TE$ and $\bar u =d\tau (u)$. Since $\tau^*\sigma=d\alpha$ by  Lemma~\ref{lem:dalpha} we get
  \[
  \vp T \sigma(\bar u,\dot \mu) = T^2\<\na_t \dot \mu,\bar u\> + T(\vp -T)\sigma(\bar u,\dot \mu),
  \]
  which implies that 
  $$\<\na_t \dot \mu, \bar u\>=\sigma(\bar u,\dot \mu)=\<\bar u, Y_\mu(\dot \mu)\>\  \Rightarrow \ \na_t \dot \mu = Y_\mu(\dot \mu),$$ 
  as we wished to prove.
\end{proof}

We shall notice that the last proposition does not imply that critical points of
$\S_k$ are in bijection with closed magnetic geodesics. In fact, there
are different critical points of $\S_k$, which project to the same
closed magnetic geodesic. Suppose indeed that $(\delta,T)$ is a closed magnetic geodesic with energy $\bar k$ and denote with 
$$\text{Crit} (\delta,T)\subseteq \mathcal M,\quad \text{Crit} (S^1\cdot (\delta,T))\subseteq \mathcal M,$$ 
the set of critical point of $\S_k$ ``above'' $(\delta,T)$ and above the critical circle $S^1\cdot (\delta,T)$ respectively. Here $S^1$ is the action on the loop space of $M$ given by translation of the base point. Morover, let $\mathcal H:=(S^1\times S^1)\rtimes \Z$
be the Heisenberg-type group, whose group operation $\oplus$ is defined by 
$$(r_0,s_0,u_0)\oplus (r_1,s_1,u_1) = (r_0+r_1+u_1s_0, s_0+s_1,u_0+u_1).$$
Then, $\mathcal H$ acts on $\mathcal M$ by
$$(r,s,u)\cdot (\gamma(t),T,\varphi)= \Big (e^{2\pi i(ut + r)}\cdot \gamma(t+ s),T, \varphi-2\pi u\Big ).$$
The functional $\S_k$ transforms as follows under the action:
$$\S_k((r,s,u)\cdot (\gamma,T,\varphi))= \S_k(\gamma,T,\varphi) + 2\pi u.$$
Finally, identify $S^1\times \Z = \{(r,0,u)\in \mathcal H\}$ and consider the associated sub-action on $\mathcal M$. If $(\gamma,T,\varphi)$ is a critical point 
of $\S_k$ that projects to $(\delta,T)$, then $\text{Crit} (\delta,T)$ is the orbit of $(\gamma,T,\varphi)$ und the $S^1\times \Z$-action, whilst $\text{Crit} (S^1\cdot (\delta,T))$ 
is the orbit of $(\gamma,T,\varphi)$ under the $\mathcal H$-action. 


\subsection{The Palais-Smale condition for $\S_k$.} As already explained in the introduction to this section we will prove the existence of critical 
points for $\S_k$ using variational methods. To this purpose we will need the following definition.

\begin{dfn}
A sequence $(\gamma_h,T_h,\vp_h)$ contained in a given connected component of $\mathcal M$ is
called a \textbf{Palais-Smale sequence at level c for $\S_k$} if 
$$\lim_{h\rightarrow +\infty}\ \S_k(\gamma_h,T_h,\vp_h)=c\, , \qquad \lim_{h\rightarrow +\infty}\ |d\S_k(\gamma_h,T_h,\vp_h)| =0\, .$$
\end{dfn}

In the definition above, $|\cdot|$ denotes, with slight abuse of notation, the (dual) norm on $T^*\mathcal M$ induced by the Riemannian metric $g_{\mathcal M}$.
Observe that a limit point of a Palais-Smale sequence for $\S_k$ is trivially a critical point of $\S_k$. Therefore, we need to look for necessary and 
sufficient conditions for a Palais-Smale sequence to admit converging subsequences. Before doing that we need a lemma comparing the behavior of $T_h$ and $\vp_h$
on a Palais-Smale sequence. In the following  we will denote with $e(\gamma):=\int_0^1 |\dot \gamma|^2\, dt$ the kinetic energy of a loop $\gamma:S^1\to E$. 
\begin{lem}
Suppose $(\gamma_h,T_h,\vp_h)$ is a Palais-Smale sequence for $\S_k$ at level $c$, then:
\begin{enumerate}
\item $T_h\rightarrow 0$ if and only if $\vp_h\rightarrow -c$.
\item The $T_h$'s are uniformly bounded from above if and only if the $\vp_h$'s are uniformly bounded. 
\item $T_h\rightarrow +\infty$ if and only if $\vp_h\rightarrow +\infty$.
\end{enumerate}
\label{lem:Tvp}
\end{lem}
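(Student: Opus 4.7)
The plan is to reduce all three equivalences to a single algebraic identity relating $\S_k$, $\partial_T \S_k$, $T$ and $\vp$; the Palais--Smale condition on $\partial_\vp \S_k$ will play no role.

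From the explicit formula \eqref{functionalsk} one immediately computes
\[
\partial_T \S_k(\gamma,T,\vp) = -\frac{1}{2T^2}\int_0^1 |\dot\gamma+\vp Z|^2\, dt + k,
\]
so that multiplying by $T$ and adding $\S_k$ cancels the integral term and yields the identity
\[
\S_k(\gamma,T,\vp) + T\,\partial_T \S_k(\gamma,T,\vp) \;=\; 2kT - \vp.
\]
Applied to a Palais--Smale sequence $(\gamma_h,T_h,\vp_h)$ at level $c$, this becomes
\[
\vp_h \;=\; T_h\bigl(2k - \eta_h\bigr) - c - \delta_h,
\]
where $\eta_h := \partial_T \S_k(\gamma_h,T_h,\vp_h)$ and $\delta_h := \S_k(\gamma_h,T_h,\vp_h) - c$ both tend to zero. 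Since $k=\bar k+\tfrac12>\tfrac12$, for $h$ large the factor $a_h := 2k-\eta_h$ lies in $[k,3k]$; in particular it is bounded and bounded away from zero.

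From this point all three claims are essentially arithmetic. For (1), $T_h\to 0$ yields $a_h T_h\to 0$ and hence $\vp_h\to -c$, while conversely the inverted formula $T_h=(\vp_h + c + \delta_h)/a_h$ shows that $\vp_h\to -c$ forces $T_h\to 0$. For (2), boundedness of $T_h$ makes $a_h T_h$ bounded and hence $\vp_h$ bounded (recall $T_h>0$), and the same inverted formula gives the converse. For (3), $T_h\to+\infty$ yields $a_h T_h\ge k T_h\to+\infty$ so $\vp_h\to+\infty$, and $\vp_h\to+\infty$ forces $T_h\to+\infty$ via the inverted formula.

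The argument is purely formal, so there is no substantial obstacle to overcome; the only analytic input is the positivity $k>0$, which keeps $a_h$ uniformly away from zero and makes each implication reversible.
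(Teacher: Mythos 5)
Your proof is correct and follows essentially the same route as the paper: both combine the level condition $\S_k\to c$ with the Lagrange-multiplier condition $\partial_T\S_k\to 0$ to eliminate the integral term and arrive at the relation $\vp_h = 2kT_h + T_h\,o(1) - c + o(1)$, from which all three equivalences are read off. Your packaging of this as the exact identity $\S_k + T\,\partial_T\S_k = 2kT - \vp$ is a slightly cleaner way to present the same computation.
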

\begin{proof}
  If $(\gamma_h,T_h,\vp_h)$ is a Palais-Smale sequence, then we have 
\begin{align}
c + o(1)& = \ \S_k(\gamma_h,T_h,\vp_h) \ =\  \frac{1}{2T_h}\int_0^1 |\dot \gamma_h+\vp_hZ(\gamma_h)|^2\, dt - \vp_h + kT_h\, ;\label{1}\\ 
o(1) & = \ \frac{\partial \S_k}{\partial T}(\gamma_h,T_h,\vp_h) \ =\  k -\frac{1}{2T_h^2}\int_0^1 |\dot \gamma_h+\vp_h Z(\gamma_h)|^2\, dt\, ;\label{2}\\
o(1) & = \ \frac{\partial \S_k}{\partial \vp}(\gamma_h,T_h,\vp_h) \ =\  \frac{1}{T_h}\int_0^1 \langle \dot \gamma_h,Z(\gamma_h)\rangle \, dt + \frac{\vp_h}{T_h} - 1\, .\label{3}
\end{align}
From \eqref{2} it follows that 
$$\frac{1}{2T_h} \int_0^1 |\dot \gamma_h+\vp_hZ(\gamma_h)|^2 \, dt \ =\ kT_h + T_ho(1)$$
and then by replacing in \eqref{1} we get 
$$kT_h + T_ho(1) -\vp_h + kT_h = c+o(1)$$
from which it follows that 
$$\vp_h = 2k T_h + T_h o(1) - c + o(1)\, .$$
This shows at once (1),(2) and (3).
\end{proof}

\begin{lem}
Suppose $(\gamma_h,T_h,\vp_h)$ is a Palais-Smale sequence for $\S_k$ at level $c$ in a given connected component of $\mathcal M$. Then the following 
hold: 
\begin{enumerate}
\item Set $\mu_h:=\tau(\gamma_h)$ for every $h\in \N$. If $T_h\to 0$, then 
$$\int_0^1 |\dot \gamma_h+\vp_hZ(\gamma_h)|^2\, dt \to 0, \qquad e(\mu_h)\to 0.$$
\item If $0<T_*\leq T_h\leq T^*<+\infty$ for every $h\in \N$, then  $(\gamma_h,T_h,\vp_h)$ admits a converging subsequence.
\end{enumerate}
\label{lem:palaissmale}
\end{lem}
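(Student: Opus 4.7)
For part (1), my plan is to combine the Palais--Smale identities~\eqref{1} and~\eqref{2} from the proof of Lemma~\ref{lem:Tvp} to obtain
\[
\int_0^1 |\dot\gamma_h + \vp_h Z(\gamma_h)|^2\, dt = 2kT_h^2 + T_h^2\, o(1),
\]
whose right-hand side tends to zero as $T_h\to 0$; this gives the first claim. For the bound on $e(\mu_h)$, I would use the $g^\alpha$-orthogonal splitting $\dot\gamma_h = \xi_h + \<\dot\gamma_h,Z(\gamma_h)\> Z(\gamma_h)$ with $\xi_h\in\ker\alpha$. Since $|Z|\equiv 1$ and the splitting is orthogonal,
\[
|\dot\gamma_h + \vp_h Z(\gamma_h)|^2 = |\xi_h|^2 + \bigl(\<\dot\gamma_h,Z(\gamma_h)\> + \vp_h\bigr)^2,
\]
so $|\xi_h|^2\le|\dot\gamma_h + \vp_h Z(\gamma_h)|^2$. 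Because $\tau\colon (E,g^\alpha)\to (M,g)$ is a Riemannian submersion, $|\dot\mu_h|=|\xi_h|$, and hence $e(\mu_h) = \int_0^1|\xi_h|^2\,dt \le \int_0^1|\dot\gamma_h+\vp_h Z(\gamma_h)|^2\,dt \to 0$.

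For part (2), Lemma~\ref{lem:Tvp} promotes the two-sided bound on $\{T_h\}$ to a bound on $\{\vp_h\}$, so after passing to a subsequence I may assume $T_h\to T_\infty\in[T_*,T^*]$ and $\vp_h\to\vp_\infty\in\R$. The identity above together with these bounds and with compactness of $E$ produces a uniform $H^1$-bound on $\{\gamma_h\}$. By Rellich--Kondrachov a further subsequence thus satisfies $\gamma_h\rightharpoonup\gamma_\infty$ weakly in $H^1(S^1,E)$ and uniformly in $t$, towards some limit loop $\gamma_\infty\in H^1(S^1,E)$.

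The main difficulty is promoting weak $H^1$-convergence to strong, and this is the step I expect to be the crux. The plan is to regard $\S_k(\,\cdot\,,T_\infty,\vp_\infty)$ as the free-loop action functional of the Tonelli Lagrangian $L_{T_\infty,\vp_\infty}\colon TE\to\R$, which is well-known to satisfy the Palais--Smale condition on $H^1(S^1,E)$ (see e.g.~\cite{Abbondandolo:2009gg,Contreras:2006yo}). Continuous dependence of the family $(T,\vp)\mapsto d_\gamma\S_k(\cdot,T,\vp)$ evaluated on the $H^1$-bounded sequence $\{\gamma_h\}$ then yields
\[
d_\gamma\S_k(\gamma_h,T_\infty,\vp_\infty) = d_\gamma\S_k(\gamma_h,T_h,\vp_h) + o(1) = o(1),
\]
so $\{\gamma_h\}$ itself is a Palais--Smale sequence for the fixed-parameter Tonelli action, and the standard theory furnishes a subsequence converging strongly in $H^1$. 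Concretely, I would realise this last step by writing $\gamma_h=\exp_{\gamma_\infty}(\eta_h)$ in a tubular neighbourhood of $\gamma_\infty$ (available because $\gamma_h\to\gamma_\infty$ uniformly) and exploiting the strict convexity of $L_{T_\infty,\vp_\infty}$ in the velocity variable together with $d_\gamma\S_k(\gamma_h,T_\infty,\vp_\infty)[\eta_h]\to 0$ to deduce $\|\na_t\eta_h\|_{L^2}\to 0$, hence $\eta_h\to 0$ in $H^1$. Combined with $T_h\to T_\infty$ and $\vp_h\to\vp_\infty$ this produces the desired convergent subsequence in $\M$.
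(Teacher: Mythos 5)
Your proposal is correct and follows essentially the same route as the paper: part (1) is the same orthogonal-splitting argument combined with the Riemannian submersion property of $\tau$, and part (2) uses Lemma \ref{lem:Tvp} to bound $\vp_h$, the resulting uniform $H^1$-bound and compact embedding to extract a uniform limit, and then the standard convexity argument for Tonelli Lagrangians to upgrade to strong $H^1$-convergence --- the very step the paper delegates to the citation of \cite[Lemma 5.3]{Abbondandolo:2013is}, which you instead sketch explicitly (note only that in that last step one also uses $d_\gamma\S_k(\gamma_\infty,T_\infty,\vp_\infty)[\eta_h]\to 0$, coming from the weak convergence $\eta_h\rightharpoonup 0$, alongside $d_\gamma\S_k(\gamma_h,T_\infty,\vp_\infty)[\eta_h]\to 0$).
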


\begin{proof} 
  We start proving (1). The first assertion follows trivially from
  \eqref{2}. We now show that $e(\mu_h)\to 0$.  For every $h\in \N$ we
  consider the splitting
  $$\dot \gamma_h= \zeta_h + \<\dot \gamma_h,Z(\gamma_h)\>Z(\gamma_h),$$
  with $\zeta_h\in \ker \alpha$, and using again \eqref{2} we get
  \begin{align*}
    2kT_h^2 + o(T_h^2) & = \int_0^1 \Big |\zeta_h+\big (\<\dot \gamma_h,Z(\gamma_h)\>+\vp_h\big )Z(\gamma_h)\Big |^2 \, dt \\
    & = \int_0^1 |\zeta_h|^2\, dt + \int_0^1 \big (\<\dot
    \gamma_h,Z(\gamma_h)\>+\vp_h\big)^2 \, dt.
  \end{align*}
  In particular,
$$e(\zeta_h)=\int_0^1 |\zeta_h|^2\, dt = o(1).$$
This shows the claim, as by construction $d\tau$ is an isometry on $\ker \alpha$.

We now prove (2). Since the $T_h$'s are uniformly bounded and bounded away from zero, by Lemma \ref{lem:Tvp} we have that 
also the $\vp_h$'s are uniformly bounded, i.e. there exists $b\in \R$ such that $|\vp_h|\leq b$ for every $h\in\N$. Therefore, up to 
passing to a subsequence, we can assume that $T_h\rightarrow \bar T$ and $\vp_h\rightarrow \bar \vp$ for $h\to +\infty$. 
Moreover, using \eqref{1} and \eqref{3} we get 
\begin{align*}
c+1  &\geq \frac{1}{2T_h}\int_0^1 |\dot \gamma_h+\vp_h Z(\gamma_h)|^2\, dt - \vp_h + kT_h\\
        &= \frac{1}{2T_h}\int_0^1 |\dot \gamma_h|^2\, dt + \frac{\vp_h}{T_h}\int_0^1\<\dot \gamma_h,Z(\gamma_h)\>\, dt + \frac{\vp_h^2}{2T_h} - \vp_h + kT_h\\
        &= \frac{1}{2T_h}\int_0^1 |\dot \gamma_h|^2\, dt -\frac{\vp_h^2}{2T_h} + kT_h + o(1),
        \end{align*}
from which we deduce that, up to neglecting finitely many $h\in \N$,
$$\int_0^1 |\dot \gamma_h|^2\, dt \leq 2T_h \left (c+2 + \frac{\vp_h^2}{2T_h} -kT_h\right )\leq 2T^*\left (c+2 + \frac{b^2}{2T_*} \right ).$$

It follows that the family $\{\gamma_h\}\subseteq H^1(S^1,E)$ is $\frac 12$-H\"older-equicontinuous and hence by the Ascoli-Arzel\'a theorem 
it converges (up to a subsequence) uniformly to an element $\gamma\in C^0(S^1,E)$. Now one argues exactly as in \cite[Lemma 5.3]{Abbondandolo:2013is}
to conclude that actually $\gamma_h\to\gamma$ strongly in $H^1$.
\end{proof}

\subsection{Properties of $\S_k$ close to fiberwise rotations.} In
this subsection we study the properties of the functional $\S_k$ close
to rotations on the fibers; in particular we show that fiberwise
rotations are in some sense local minima of $\S_k$.  This generalizes
to our setting a similar well-known statement in the classical
Lagrangian setting (see for instance
\cite{Abbondandolo:2013is,Contreras:2006yo}) saying that constant
loops are ``local minima'' for the free-period Lagrangian action
functional. The contents of this section will be then used in the next
one to associate with the functional $\S_k$ a complete negative
gradient flow by truncating gradient flow-lines which approach
fiberwise rotations.

Thus, suppose that the loop $\gamma_f:S^1\rightarrow E$ satisfies $\dot \gamma_f=-\vp Z(\gamma_f).$
Clearly, $\vp \in2\pi\Z$. Assume that $\vp =2\pi a$, for some $a\in \Z$, and notice that
\begin{equation}
\S_k(\gamma_f,T,2\pi a) = - 2\pi a + kT>-2\pi a
\label{actionoffibers}
\end{equation}
converges to $-2\pi a$ for $T\to 0$. For $\delta >0$ we now define the set 
$$\mathcal V_\delta := \Big \{(\gamma,T,\vp)\in \mathcal M\ \Big |\ \int_0^1 |\dot \gamma + \vp Z(\gamma)|^2\, dt < \delta\Big \}.$$

Our first goal is to show that, for $\delta >0$ sufficiently small, the value of $\vp$ has to be close to $2\pi \Z$ for every element in $\mathcal V_\delta$.

\begin{lem}
If $(\gamma,T,\vp)\in \mathcal V_\delta$, then $\vp \in (2\pi a - \sqrt{\delta},2\pi a+\sqrt{\delta})$ for some $a\in \Z$.
\label{boundonvp}
\end{lem}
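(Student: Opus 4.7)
The plan is to split the argument into two pieces: apply Cauchy--Schwarz to bound $\vp$ in terms of $\int_0^1 \alpha(\dot\gamma)\,dt$, and then use a horizontal-lift construction to show that this latter integral lies essentially in $2\pi\Z$. Using the orthogonal splitting $TE = \ker\alpha \oplus \R\cdot Z$ coming from the metric $g^\alpha$ (with $|Z|=1$), write $\dot\gamma = \zeta + f\,Z$ where $\zeta\in\ker\alpha$ and $f := \alpha(\dot\gamma)$. Then $|\dot\gamma + \vp Z|^2 = |\zeta|^2 + (f+\vp)^2$, so the hypothesis $(\gamma,T,\vp)\in\mathcal V_\delta$ implies $\int_0^1(f+\vp)^2\,dt < \delta$, whence Cauchy--Schwarz yields
\[
\left| \vp + \int_0^1 f\,dt \right| < \sqrt\delta.
\]

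Next, I would show that $\int_0^1 f\,dt$ is close to $2\pi\Z$. Since $d\tau|_{\ker\alpha}$ is an isometry, $\mu := \tau\circ\gamma$ satisfies $|\dot\mu|=|\zeta|$, so its length is at most $\sqrt\delta$; for $\delta$ small compared to the injectivity radius of $(M,g)$ the loop $\mu$ is contractible and bounds a disc $D$ of area $O(\delta)$. Let $\tilde\mu:[0,1]\to E$ be the horizontal lift of $\mu$ starting at $\gamma(0)$. Using $d\alpha = \tau^*\sigma$ and Stokes, $\tilde\mu(1) = e^{-ih}\gamma(0)$ with $h = \int_D \sigma$ and $|h|\leq \|\sigma\|_\infty\cdot\mathrm{Area}(D) = O(\delta)$. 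Writing $\gamma(t) = e^{i\rho(t)}\tilde\mu(t)$ with $\rho(0)=0$ and using that the $S^1$-action preserves horizontality, one computes $f = \dot\rho$; the closure condition $\gamma(1)=\gamma(0)$ then forces $\rho(1) \in -h + 2\pi\Z$, so $\int_0^1 f\,dt = 2\pi a - h$ for some $a\in\Z$. Combining the two estimates, $|\vp + 2\pi a - h| < \sqrt\delta$, and the $O(\delta)$ holonomy correction is dominated by $\sqrt\delta$ for $\delta$ small, yielding the claim.

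The main obstacle is this second step: quantifying that the ``vertical winding'' $\int_0^1\alpha(\dot\gamma)\,dt$ is an integer multiple of $2\pi$ up to an error. The bound $\sqrt\delta$ in the statement is tight only modulo an $O(\delta)$ holonomy correction from the curvature of the connection, which the statement implicitly absorbs by taking $\delta$ small. An alternative approach that avoids the horizontal lift is to observe that the shifted path $\tilde\gamma(t):=e^{i\vp t}\gamma(t)$ satisfies $|\dot{\tilde\gamma}|=|\dot\gamma+\vp Z|$, has length less than $\sqrt\delta$ by Cauchy--Schwarz, and joins $\gamma(0)$ to $e^{i\vp}\gamma(0)$; the lemma then reduces to showing that the distance from $\vp$ to $2\pi\Z$ is bounded above by the length of any path in $E$ from $q$ to $e^{i\vp}q$ (up to the same holonomy correction).
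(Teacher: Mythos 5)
Your first argument (Cauchy--Schwarz on the vertical component plus a horizontal-lift/holonomy estimate) is correct, and the ``alternative approach'' you sketch at the end is in fact precisely the paper's proof: the authors set $\mu(t):=e^{i\vp t}\gamma(t)$, observe $\dot\mu=de^{i\vp t}(\dot\gamma+\vp Z)$ so that $\mu$ is a path of length $<\sqrt\delta$ from $\gamma(0)$ to $e^{i\vp}\gamma(0)$, and then conclude ``trivially'' that $\vp$ lies within $\sqrt\delta$ of $2\pi\Z$. The caveat you raise about the holonomy correction is well taken and applies equally to the paper's version: the inequality $\mathrm{dist}(\vp,2\pi\Z)\le d(q,e^{i\vp}q)$ is not exact in general, since a short loop in $M$ enclosing nonzero flux of $\sigma$ lifts to a short horizontal path from $q$ to $e^{ih}q$ with $h\neq 0$, and making the paper's last step rigorous requires exactly your second step -- project the short path to $M$, fill the resulting loop (of length $<\sqrt\delta$) by a disc of area $O(\delta)$, and bound the holonomy by $\|\sigma\|_\infty\cdot O(\delta)=o(\sqrt\delta)$. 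So your main argument is longer but actually supplies the justification the paper elides; the price is that the bound becomes $\sqrt\delta+O(\delta)$ rather than $\sqrt\delta$, which is harmless for the sequel (the constant $\epsilon$ in Lemma \ref{boundactiononvdelta} only changes by $O(\delta)$, and one may shrink $\delta$). What the paper's route buys is brevity, working with the single quantity $d(\gamma(0),e^{i\vp}\gamma(0))$ instead of the splitting $\dot\gamma=\zeta+fZ$; what yours buys is that the error is isolated explicitly in the holonomy term $h$.
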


\begin{proof}
If $(\gamma,T\vp)\in\mathcal V_\delta$, then $\gamma$ satisfies $\dot \gamma = -\vp Z(\gamma) + \eta,$
for some $\eta$ such that 
$$\left (\int_0^1 |\eta|\, dt\right)^2 \leq \int_0^1 |\eta |^2 \, dt = \int_0^1 |\dot \gamma+\vp Z(\gamma)|^2 \, dt <\delta.$$
We now consider $\mu (t):= e^{i\vp t}\gamma(t)$ and compute 
$$\dot \mu = \vp Z(\mu) + e^{i\vp t}\dot \gamma=\vp Z(\mu)+e^{i\vp t} \big (-\vp Z(\gamma)+\eta\big )=e^{i\vp t}\eta.$$
If we denote with $d(\cdot,\cdot)$ the distance on $E$ induced by the Riemannian metric $g^\alpha$, then from the computation above it follows that 
$$d(\mu(1),\mu(0))\leq \int_0^1 |e^{i\vp t}\eta|\, dt < \sqrt{\delta};$$
moreover, $\mu(0)=\gamma(0)=\gamma(1)=e^{-i\vp}\mu(1)$. This implies that
\begin{equation*}
d(\mu(0),e^{-i\vp}\mu(0))=d(e^{-i\vp} \mu(1),e^{-i\vp}\mu(0))=d(\mu(1),\mu(0))< \sqrt{\delta}
\end{equation*}
and  hence trivially $\vp\in (2\pi a-\sqrt{\delta},2\pi a +\sqrt{\delta})$ for some $a\in \Z$.
\end{proof}

By the lemma above we easily get that $\mathcal V_\delta$ is the disjoint union of the sets $\mathcal V_\delta^a:= \mathcal V_\delta \cap \{\vp \in (2\pi a - \sqrt{\delta}, 2\pi a+\sqrt{\delta})\}$.
Furthermore, each set $\mathcal V_\delta^a$ contains only the fiberwise rotations given by $(\gamma_f,T,2\pi a)$. Our next step will be to show that 
the value of $\S_k$ on $\partial \mathcal V_\delta^a$ is bounded away from $-2\pi a$ by a positive constant. 

\begin{lem}
For $\delta>0$ small enough there exists $\epsilon>0$ such that, for all $a\in \Z$, 
$$\inf_{\mathcal V_\delta^a} \S_k = -2\pi a,\quad \inf_{\partial \mathcal V_\delta^a} \S_k > - 2\pi a + \epsilon.$$
\label{boundactiononvdelta}
\end{lem}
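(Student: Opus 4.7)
The plan is to bound $\S_k$ from below on $\mathcal V_\delta^a$ by combining the AM--GM inequality
$$\frac{I}{2T} + kT \;\ge\; \sqrt{2kI}, \qquad I(\gamma,\vp):=\int_0^1 |\dot\gamma + \vp Z(\gamma)|^2\, dt,$$
with a quantitative refinement of Lemma~\ref{boundonvp}. Retracing the proof of that lemma, the curve $\mu(t):=e^{i\vp t}\gamma(t)$ satisfies $|\dot\mu|=|\dot\gamma+\vp Z(\gamma)|$, so by Cauchy--Schwarz $d(\mu(0),e^{i\vp}\mu(0))\le \sqrt{I}$. Since $Z$ is a unit Killing vector field on the compact manifold $E$, the orbit $t\mapsto e^{it}p$ agrees up to order $O(t^2)$ with the geodesic sharing its initial velocity, so a standard compactness argument gives, for every $\varepsilon>0$, a radius $r_0(\varepsilon)>0$ such that $d(p,e^{i\beta}p)\ge (1-\varepsilon)|\beta|$ uniformly in $p\in E$ whenever $|\beta|\le r_0$. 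Hence for $\delta$ small enough any $(\gamma,T,\vp)\in\mathcal V_\delta^a$ obeys the sharper estimate $|\vp-2\pi a|\le (1-\varepsilon)^{-1}\sqrt{I}$.

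Putting the two bounds together I find
$$\S_k(\gamma,T,\vp) \;\ge\; \sqrt{2kI} - 2\pi a - \tfrac{1}{1-\varepsilon}\sqrt{I} \;=\; -2\pi a + c_k\sqrt{I},\qquad c_k:=\sqrt{2k}-\tfrac{1}{1-\varepsilon}.$$
Because $k=\bar k+\tfrac 12 > \tfrac 12$, one has $\sqrt{2k}>1$, so $\varepsilon$ can be chosen small (depending only on $k$) to ensure $c_k>0$, whence $\S_k\ge -2\pi a$ on $\mathcal V_\delta^a$. Combined with the identity $\S_k(\gamma_f,T,2\pi a)=kT-2\pi a\to -2\pi a$ as $T\to 0^+$, this establishes the first claim $\inf_{\mathcal V_\delta^a}\S_k=-2\pi a$.

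For the boundary estimate I observe that a point of $\partial\mathcal V_\delta^a\subset\mathcal M$ is the limit of a sequence in $\mathcal V_\delta^a$ escaping either through $I\nearrow\delta$ or through $|\vp-2\pi a|\nearrow\sqrt\delta$; by the refined bound each case forces $I\ge (1-\varepsilon)^2\delta$ in the limit. Feeding this into the combined estimate yields $\S_k\ge -2\pi a + c_k(1-\varepsilon)\sqrt\delta$ on $\partial\mathcal V_\delta^a$, and any $\epsilon<c_k(1-\varepsilon)\sqrt\delta$ does the job. The main obstacle is really this quantitative sharpening of Lemma~\ref{boundonvp}, replacing $\sqrt\delta$ by $\sqrt{I}$ on the right-hand side; without it the lower bound degenerates at the fiberwise rotations (where $I=0$ but $-\vp=-2\pi a$), and one cannot pin the infimum to exactly $-2\pi a$.
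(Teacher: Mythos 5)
Your proof is correct and, for the boundary estimate, follows the same route as the paper's: both combine the minimization $\frac{I}{2T}+kT\ge\sqrt{2kI}$ over $T$ with the localization of $\vp$ near $2\pi\Z$ from Lemma \ref{boundonvp}, ending with an $\epsilon$ of order $(\sqrt{2k}-1)\sqrt{\delta}$. The genuine difference is that you also prove the first equality $\inf_{\mathcal V_\delta^a}\S_k=-2\pi a$, which the paper's proof passes over in silence, and you correctly identify why the crude bound $|\vp-2\pi a|<\sqrt{\delta}$ is useless there: it degenerates as $I\to 0$, i.e.\ exactly at the fiberwise rotations. Your fix --- controlling $|\vp-2\pi a|$ by a multiple of $\sqrt{I}$ --- is the right one; note, however, that this refinement already follows from Lemma \ref{boundonvp} as stated, since a triple with energy $I$ lies in $\mathcal V_{\delta'}$ for every $\delta'>I$, whence $|\vp-2\pi a|\le\sqrt{I}$ (this is precisely how the authors invoke the lemma later, in Lemma \ref{noncompleteness} and Corollary \ref{cor:palaissmale}). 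What your $(1-\varepsilon)^{-1}$ bookkeeping actually repairs is the step dismissed as ``trivial'' inside the proof of Lemma \ref{boundonvp}, namely the lower bound $d(p,e^{i\beta}p)\ge(1-\varepsilon)\,\mathrm{dist}(\beta,2\pi\Z)$ for $\beta$ near $2\pi\Z$, which does require the compactness argument you sketch and, strictly speaking, costs a factor that the paper's constant silently absorbs; since $\sqrt{2k}>1$ this loss is harmless, and your $\epsilon=c_k(1-\varepsilon)\sqrt{\delta}$ is the honest version of the paper's $(\sqrt{2k}-1)\sqrt{\delta}$. Your two-case description of $\partial\mathcal V_\delta^a$ is also fine (in fact, for $\delta$ small the second case reduces to the first, since the sets $\mathcal V_\delta^{a}$ are open, pairwise disjoint, and exhaust $\mathcal V_\delta$, so every boundary point has $I=\delta$, which is what the paper uses directly).
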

\vspace{-6mm}
\begin{proof}
For every $(\gamma,T,\vp)\in \partial \mathcal V_\delta^a$ we readily compute
$$\S_k(\gamma,T,\vp)=\frac{\delta}{2T} - \vp + kT \geq \sqrt{2k}\sqrt{\delta} - \vp \geq \sqrt{2k}\sqrt{\delta} - 2\pi a - \sqrt{\delta},$$
where in the penultimate inequality we have used minimization in the variable $T$, whilst in the last one we have used Lemma \ref{boundonvp}. The thesis follows 
as $\epsilon:=(\sqrt{2k}-1)\sqrt{\delta}$ is positive for $k>1/2$.
\end{proof}

\noindent By Equation \eqref{actionoffibers} we can easily find $T_0$ such that 
\begin{equation}
\S_k(\gamma_f,T,2\pi a)\in (-2\pi a, -2\pi a + \epsilon/4)
\label{estimateforfiberwiserotations}
\end{equation}
for every $T\in (0,T_0]$ and every $a\in \Z$. Observe that the fibers of $E$ might be contractible, as the example of the Hopf fibration $S^3\to S^2$ shows. 
However, fiberwise rotations with different winding number, say $(\gamma_f,T,2\pi a)$ and $(\gamma_f',T',2\pi a')$ with $a\neq a'\in \Z$ and $T,T'\leq T_0$, are not contained in the same connected component of
$$\big \{\S_k< \max \{-2\pi a + \epsilon,-2\pi a'+\epsilon\}\big \}$$
as every path from $(\gamma_f,T,2\pi a)$ to $(\gamma_f',T',2\pi a')$ has to intersect $\partial \mathcal V_\delta$, being the two fiberwise rotations in different 
connected components of $\mathcal V_\delta$. 

Finally we notice that, combining the discussion above with Lemma \ref{lem:palaissmale},(i) we obtain the following statement for Palais-Smale sequences with 
$T_h$ going to zero.
 
\begin{cor}
Let $(\gamma_h,T_h,\vp_h)$ be a Palais-Smale sequence for $\S_k$ at level $c$ in a given connected component of $\mathcal M$ such that $T_h\to 0$. Then $c=2\pi a$ 
for some $a\in \Z$ and $(\gamma_h,T_h,\vp_h)$ eventually enters the set $\{\S_k< -2\pi a + \epsilon/4\}\cap \mathcal V_\delta^a$.
\label{cor:palaissmale}
\end{cor}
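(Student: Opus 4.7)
The plan is to assemble the three preceding lemmas of this subsection together with Lemma~\ref{lem:Tvp}. From Lemma~\ref{lem:Tvp}(1), the hypothesis $T_h\to 0$ forces $\vp_h\to -c$. From Lemma~\ref{lem:palaissmale}(1), the twisted kinetic quantity $\int_0^1|\dot\gamma_h+\vp_h Z(\gamma_h)|^2\,dt$ tends to $0$. Consequently, for every prescribed $\delta'>0$ the triple $(\gamma_h,T_h,\vp_h)$ lies in $\mathcal V_{\delta'}$ for all sufficiently large $h$.

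Next I would invoke Lemma~\ref{boundonvp}: for each such $h$ there is an integer $a(h)$ with $\vp_h\in(2\pi a(h)-\sqrt{\delta'},2\pi a(h)+\sqrt{\delta'})$. For $\sqrt{\delta'}<\pi$ these neighborhoods are pairwise disjoint, hence since $\vp_h$ converges the integer $a(h)$ must eventually be constant, say equal to some $a\in\Z$; moreover $a$ is independent of the (small) choice of $\delta'$, since if two different intervals were selected for two different values of $\delta'$ the convergent sequence $\vp_h$ would have to belong to both, forcing their centers to coincide. Letting $\delta'\downarrow 0$ therefore pins down $-c=2\pi a$, which yields the first assertion of the corollary up to the standard relabeling $a\mapsto -a$.

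For the second assertion, I fix $\delta$ to be the small constant provided by Lemma~\ref{boundactiononvdelta}; the reasoning above already shows that $(\gamma_h,T_h,\vp_h)\in\mathcal V_\delta^a$ for all large $h$, and since $\S_k(\gamma_h,T_h,\vp_h)\to c=-2\pi a$ by assumption, we also have $\S_k(\gamma_h,T_h,\vp_h)<-2\pi a+\epsilon/4$ eventually. Intersecting these two conditions gives the claim.

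The conceptual content is that the two asymptotic conditions $\vp_h\to -c$ and the vanishing of the twisted energy together force $(\gamma_h,T_h,\vp_h)$ to cluster near a genuine fiberwise rotation; since such rotations carry $\vp\in2\pi\Z$ and action values close to an element of $-2\pi\Z$, the limit $c$ of the actions must itself lie in $2\pi\Z$. No serious obstacle is expected; the argument is essentially a bookkeeping exercise tying together Lemma~\ref{lem:Tvp}, Lemma~\ref{lem:palaissmale}(1), Lemma~\ref{boundonvp} and Lemma~\ref{boundactiononvdelta}, with the only mildly delicate point being the argument that the integer $a(h)$ stabilizes independently of $\delta'$.
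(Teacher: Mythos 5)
Your proof is correct and follows essentially the same route as the paper: enter $\mathcal V_\delta$ using the smallness of the twisted energy, localize $\vp_h$ near $2\pi\Z$ via Lemma \ref{boundonvp}, and conclude. The only (harmless) difference is that you pin down $c\in -2\pi\Z$ and the stabilization of $a_h$ through the convergence $\vp_h\to -c$ from Lemma \ref{lem:Tvp}, whereas the paper sandwiches $\S_k(\gamma_h,T_h,\vp_h)$ between $-2\pi a_h+o(1)$ and $-2\pi a_h+\epsilon/4$ directly; both arguments yield the same conclusion (and both, like the paper's own proof, actually give $c=-2\pi a$ with $\vp_h\to 2\pi a$, consistent with the set $\mathcal V_\delta^a$ in the statement).
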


\begin{proof}
Fix $\delta>0$. By  \eqref{2} we have that $(\gamma_h,T_h,\vp_h)\in\mathcal V_{2kT_h^2+o(T_h^2)}$ for
every $h$. In particular $(\gamma_h,T_h,\vp_h)\in\mathcal V_\delta$ for $h$ large enough. 
Furthermore, by Lemma \ref{boundonvp}, 
$$\vp_h\in \big (2\pi a_h -\sqrt{2k}T_h+o(T_h),2\pi a_h + \sqrt{2k}T_h+o(T_h)\big ),$$ 
for some $a_h\in \Z$. It follows that
\begin{align*}
  \S_k(\gamma_h,T_h,\vp_h) &=\frac{1}{2T_h}\int_0^1 |\dot \gamma_h + \vp_hZ(\gamma_h)|^2\, dt - \vp_h + kT_h\\
  & \leq 2kT_h-2\pi a_h +\sqrt{2k}T_h + o(T_h)< -2\pi a_h + \epsilon/4
\end{align*}
for $h$ large enough. On the other hand
$$\S_k(\gamma_h,T_h,\vp_h)\geq 2kT_h - 2\pi a_h - \sqrt{2k}T_h + o(T_h)\geq -2\pi a_h$$
for $h$ large enough, as $k>1/2$. Since $\S_k(\gamma_h,T_h,\vp_h)\to c$ we might
conclude that there exists some $a\in \Z$ such that $a_h=a$ for every
$h$ large enough.  In particular $c=-2\pi a$, $\vp_h\to 2\pi a$ and,
combining the estimates above,
$$(\gamma_h,T_h,\vp_h)\in \big \{\S_k\in [-2\pi a, -2\pi a + \epsilon/4)\big \}\cap \mathcal V_\delta^a$$
for every $h$ large enough, as we wished to prove.
\end{proof}


\subsection{A truncated negative gradient flow for $\S_k$.} \label{atruncatedgradient} Consider the bounded vector field 
\begin{equation}
X_k:= \frac{-\nabla \S_k}{\sqrt{1+|\nabla \S_k|^2}}
\label{boundedvf}
\end{equation}
conformally equivalent to $-\nabla \S_k$, where the gradient of $\S_k$ is made with respect to the Riemannian metric $g_{\mathcal M}$ on $\mathcal M$
and $|\cdot|$ is the norm induced by $g_{\mathcal M}$.

Clearly, the only source of non completeness for the flow $\Phi_k$ induced by $X_k$ is given by flow-lines on which the variable $T$ goes to zero. With the next
lemma we show that such flow-lines have to approach fiberwise rotations. 

\begin{lem}
Suppose $u:[0,R)\to \mathcal M, u(r)=(\gamma(r),T(r),\vp(r))$ is a maximal flow-line of $\Phi^k$. Then there exist $a\in \Z$ and $\{r_h\}_{h\in \N}$ such that 
$r_h\uparrow R$ and 
$$\int_0^1 |\dot \gamma(r_h)+\vp(r_h)Z(\gamma(r_h))|^2\, dt \to 0, \quad \vp(r_h)\to 2\pi a, \quad \S_k(u(r_h))\to -2\pi a.$$
\label{noncompleteness}
\end{lem}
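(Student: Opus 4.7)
My plan is to extract a Palais--Smale sequence along the flow-line by exploiting the monotonicity of $\S_k$ and then apply Corollary \ref{cor:palaissmale}, using ODE uniqueness to rule out the non-degenerating case.

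First I would establish a priori bounds on $T(r)$ and $\vp(r)$. Since $|X_k|\leq 1$ by construction of \eqref{boundedvf}, both coordinates are $1$-Lipschitz in $r$ and hence stay bounded on the finite interval $[0,R)$ (the relevant case; when $R=+\infty$ the same argument will produce a sequence along which the $\S_k$-values accumulate at an infimum). Because $\S_k(\gamma,T,\vp)\geq kT-\vp\geq -\vp$, the action is also bounded below along $u$. Combined with the gradient-flow monotonicity identity
\begin{equation*}
\frac{d}{dr}\S_k(u(r))= -\frac{|\nabla\S_k(u(r))|^2}{\sqrt{1+|\nabla\S_k(u(r))|^2}}\leq 0\,,
\end{equation*}
this gives a finite limit $c:=\lim_{r\to R}\S_k(u(r))\in\R$ and, after integration,
\begin{equation*}
\int_0^R \frac{|\nabla\S_k(u(\rho))|^2}{\sqrt{1+|\nabla\S_k(u(\rho))|^2}}\, d\rho = \S_k(u(0))-c <\infty\,.
\end{equation*}

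Second, I would extract $r_h\uparrow R$ with $|\nabla\S_k(u(r_h))|\to 0$ and $\S_k(u(r_h))\to c$, so that $(u(r_h))$ is a Palais--Smale sequence for $\S_k$ at level $c$ in the connected component of $u$. I would then argue that, after passing to a subsequence, $T(r_h)\to 0$. Assume the contrary, so that $T(r_h)\in[\epsilon,T(0)+R]$ for some $\epsilon>0$. By Lemma \ref{lem:palaissmale}(2) a subsequence $u(r_{h_k})$ converges strongly in $\mathcal M$ to a critical point $u_\infty$ of $\S_k$. Continuous dependence of the ODE $\dot u=X_k(u)$ on the initial datum then produces $\eta>0$ and a neighborhood of $u_\infty$ such that integral curves starting inside it exist for time at least $\eta$; by uniqueness this integral curve coincides with a time-shift of $u$ itself. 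This would extend $u$ past $R$, contradicting the maximality of the flow-line.

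Finally, once $T(r_h)\to 0$, Corollary \ref{cor:palaissmale} applies verbatim and yields some $a\in\Z$ with $c=-2\pi a$, $\vp(r_h)\to 2\pi a$, and $(u(r_h))\in\mathcal V_\delta^a\cap\{\S_k<-2\pi a+\epsilon/4\}$ eventually, whence
\begin{equation*}
\int_0^1 |\dot\gamma(r_h)+\vp(r_h) Z(\gamma(r_h))|^2\, dt \longrightarrow 0\,,
\end{equation*}
as follows from the identity \eqref{2} read along the Palais--Smale sequence. This gives exactly the three convergences claimed.

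The main obstacle I expect is the ODE-continuation step used to dispose of the non-degenerate alternative $T(r_h)\geq\epsilon$: one must check that $X_k$ is sufficiently regular on $\mathcal M$ (locally Lipschitz in the $H^1$-topology) so that standard continuous dependence on initial data applies on the Hilbert manifold $\mathcal M$, and that the convergence of $u(r_{h_k})$ produced by Lemma \ref{lem:palaissmale}(2) is strong enough to enter the neighborhood of $u_\infty$ where the local flow is defined. Once these regularity and compactness points are settled, everything else is a clean combination of monotonicity and the Palais--Smale analysis already developed.
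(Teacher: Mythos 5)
Your strategy hinges on extracting a Palais--Smale sequence along the flow-line, but this extraction fails precisely in the case the lemma is about. A maximal non-complete flow-line has $R<\infty$ (since $|X_k|\leq 1$, the only obstruction to completeness is $\liminf_{r\to R}T(r)=0$), and on a \emph{finite} interval the finiteness of $\int_0^R |\nabla\S_k(u)|^2\big/\sqrt{1+|\nabla\S_k(u)|^2}\,dr$ carries no information: the integrand could be identically $1$ and the integral would still be finite. So you cannot conclude that $|\nabla\S_k(u(r_h))|\to 0$ along any sequence $r_h\uparrow R$. Worse, such a sequence should not be expected to exist: the lemma asserts that $u$ approaches fiberwise rotations $(\gamma_f,T,2\pi a)$, and these are \emph{not} critical points of $\S_k$ --- at such a point $\partial\S_k/\partial T=k>0$ and $\partial\S_k/\partial\vp=-1$, so $|\nabla\S_k|\geq\sqrt{k^2+1}$ there. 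Since the rest of your argument (the dichotomy via Lemma~\ref{lem:palaissmale}(2), the ODE-continuation step, and the application of Corollary~\ref{cor:palaissmale}) is predicated on having a Palais--Smale sequence, the proof does not go through.

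The paper's proof avoids Palais--Smale theory entirely. From $\liminf_{r\to R}T(r)=0$ one chooses $r_h\uparrow R$ with $T(r_h)\to 0$ \emph{and} $T'(r_h)\leq 0$; since $T'=-\rho^{-1}\,\partial\S_k/\partial T$ with $\rho=\sqrt{1+|\nabla\S_k|^2}>0$, the sign condition alone yields $\frac{1}{2T(r_h)^2}\int_0^1|\dot\gamma(r_h)+\vp(r_h)Z(\gamma(r_h))|^2\,dt\leq k$, i.e.\ the integral is at most $2kT(r_h)^2\to 0$. This gives the first assertion with no smallness of the gradient whatsoever. Lemma~\ref{boundonvp} then places $\vp(r_h)$ within $\sqrt{2k}\,T(r_h)$ of some $2\pi a(r_h)$ and sandwiches $\S_k(u(r_h))$ between $-2\pi a(r_h)$ and $-2\pi a(r_h)+\epsilon$, and the monotonicity of $r\mapsto\S_k(u(r))$ forces $a(r_h)$ to stabilize at a single $a$, which yields the remaining two convergences. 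If you want to salvage your write-up, replace the Palais--Smale extraction by this sign-of-$T'$ argument; the a priori Lipschitz bounds on $T$ and $\vp$ and the monotone convergence of $\S_k\circ u$ from your first paragraph are correct and are exactly what is needed in the final stabilization step.
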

\vspace{-7mm}
\begin{proof}
  Since $\liminf_{r\to R} T(r)= 0$ we can find a sequence
  $\{r_h\}_{h\in \N}$ such that $r_h\uparrow R$, $T(r_h)\to 0$ and
  $T'(r_h)\leq 0$ for every $h\in\N$. Using \eqref{dskdT} we get that
$$0\geq \rho_h T'(r_h)=- \frac{\partial \S_k}{\partial T}(u(r_h))=\frac{1}{2T(r_h)^2}\int_0^1 |\dot{\gamma(r_h)}+\vp(r_h)Z(\gamma(r_h))|^2\, dt - k,$$
where $\rho_h:=\sqrt{1+|\nabla \S_k(\gamma_h)|^2}$,
and hence
\begin{equation}
\int_0^1 |\dot{\gamma(r_h)}+\vp(r_h)Z(\gamma(r_h))|^2\, dt\leq 2k T(r_h)^2.
\label{integralandT}
\end{equation}
This proves the first assertion. We now use Lemma \ref{boundonvp} to infer that 
$$\vp(r_h)\in\big (2\pi a(r_h)-\sqrt{2k} T(r_h),2\pi a(r_h)+\sqrt{2k}T(r_h)\big )$$ 
for some $a(r_h)\in \Z$ and compute 
\begin{align*}
\S_k(u(r_h)) &=\frac{1}{2T(r_h)}\int_0^1 |\dot{\gamma(r_h)}+\vp(r_h)Z(\gamma(r_h))|^2\, dt - \vp(r_h)+kT(r_h)\\
	            &\leq 2 k T(r_h) - 2\pi a (r_h) + \sqrt{2 k} T(r_h)\\
	            &< -2 \pi a(r_h) + \epsilon
\end{align*}
for $h$ large enough, where $\epsilon$ is the constant given by Lemma \ref{boundactiononvdelta}. On the other hand,
$$\S_k(u(r_h)) \geq -2\pi a(r_h),$$
for the infimum of $\S_k$ on $\mathcal V_{2kT(r_h)^2}^{a(r_h)}$ is $-2\pi a(r_h)$. This shows that 
$$(\gamma(r_h),T(r_h),\vp(r_h))\in \big \{\S_k <-2\pi a(r_h)+\epsilon)\big \}\cap \mathcal V_{2kT(r_h)^2}^{a(r_h)}$$
for every $h$ large enough. Since $r \mapsto S_k\circ u(r)$ is non-increasing we conclude that there exist $\delta>0$, $a\in \Z$ and $\bar h\in \N$ such that 
$u(r)\in \V^a_\delta$ for every $r\geq r_{\bar h}$. In particular, $a(r_h)=a$ for every $h$ large enough and
hence $\vp(r_h)\to 2\pi a$, $\S_k(u(r_h))\to -2\pi a$, as we wished to
show.
\end{proof}

Using Lemma \ref{noncompleteness} it is now easy to get from $\Phi^k$ a complete flow. Namely, we stop flow-lines which enter the connected component of the sublevel set 
$\{\S_k<-2\pi a +\epsilon /2\}$ containing the fiberwise rotations $(\gamma_f,T,2\pi a)$, $T\leq T_0$.
With slight abuse of notation, we denote the complete flow also with $\Phi^k$.


\section{Proof of Theorem \ref{thm:main}}
\label{proofoftheorem}


In this section, building on the results of the previous ones, we prove Theorem \ref{thm:main}. In order to show the existence of critical points of $\S_k$, we will use the topological assumption on $M$ to build a suitable (non-trivial) minimax class on the Hilbert manifold $\mathcal M$ and a corresponding minimax function. We will then show that
such a minimax function yields critical points of $\S_k$ for almost every $k>\frac 12$.

The first step in this direction is therefore to show that the assumption on the topology of $M$ is preserved when passing to the 
$S^1$-bundle. As a precursor, we recall the relation between the homotopy groups of $E$ and the ones of $M$. 
\begin{lem}
\label{lem:homotopygroups}
 The maps $\pi_\ell(\tau):	\pi_\ell(E)\to\pi_\ell(M)$, $\ell\in\N$, of homotopy groups induced by the $S^1$-bundle $\tau:E\to M$ satisfy:
  \begin{itemize}
  \item $\pi_\ell(\tau)$ is an isomorphism for $\ell\geq 3$.
  \item $\pi_1(\tau)$ is surjective and its kernel is isomorphic to
    $\Z/m\Z$. 
  \item $\pi_2(\tau)$ is injective and $\pi_2(M)\cong m\Z\oplus \mathrm{im}\,
    \pi_2(\tau)$.
  \end{itemize}
Here $m$ is defined by the relation
$ \{ \<e,A\>\ |\ A \in H_2^S(M)\}=m\Z,$
where $H_2^S(M) \subset H_2(M;\Z)$ denotes the image of the Hurewicz map $\pi_2(M) \to H_2(M;\Z)$, $e \in H^2(M)$ the Euler class of $E \to M$ and $\<e,A\>$ the dual pairing. 
\end{lem}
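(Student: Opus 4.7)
The proof plan is to extract everything from the long exact sequence of homotopy groups associated to the fibration $S^1\hookrightarrow E\xrightarrow{\tau} M$,
\[
\cdots\to\pi_\ell(S^1)\to\pi_\ell(E)\xrightarrow{\pi_\ell(\tau)}\pi_\ell(M)\xrightarrow{\partial}\pi_{\ell-1}(S^1)\to\cdots,
\]
combined with the fact $\pi_\ell(S^1)=0$ for $\ell\geq 2$ and $\pi_1(S^1)\cong\Z$. First, for $\ell\geq 3$ the sequence immediately degenerates to $0\to\pi_\ell(E)\to\pi_\ell(M)\to 0$, giving the isomorphism in the first bullet. For the lower end the sequence becomes
\[
0\to\pi_2(E)\xrightarrow{\pi_2(\tau)}\pi_2(M)\xrightarrow{\partial}\Z\to\pi_1(E)\xrightarrow{\pi_1(\tau)}\pi_1(M)\to 0,
\]
from which injectivity of $\pi_2(\tau)$, surjectivity of $\pi_1(\tau)$, and $\ker\pi_1(\tau)\cong\Z/\mathrm{im}(\partial)$ are formal.

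The key analytical step, and the main obstacle, is to identify the connecting map $\partial\colon\pi_2(M)\to\pi_1(S^1)=\Z$ with the Euler pairing, i.e.\ to prove
\[
\partial([f])=\<e,h([f])\>\qquad\forall\,[f]\in\pi_2(M),
\]
where $h$ is the Hurewicz map. The plan is to argue geometrically: given $f\colon S^2\to M$, pull the principal $S^1$-bundle $E\to M$ back to $f^*E\to S^2$; since every $S^1$-bundle over $S^2$ is classified by its Euler number, $f^*E$ corresponds to the integer $\<f^*e,[S^2]\>=\<e,f_*[S^2]\>=\<e,h([f])\>$. On the other hand, by definition of the connecting homomorphism $\partial([f])$ is obtained by lifting $f$ over the upper and lower hemispheres and measuring the loop in $S^1$ produced along the equator; this loop is precisely the clutching datum of $f^*E$, hence represents the same Euler number in $\pi_1(S^1)$. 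Combining these two descriptions yields the desired identification, and by definition of $m$ we get $\mathrm{im}(\partial)=m\Z$.

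With this identification in hand, the remaining statements are automatic. Since $\mathrm{im}(\partial)=m\Z$, we read off $\ker\pi_1(\tau)\cong\Z/m\Z$, which is the second bullet. Moreover, exactness gives the short exact sequence of abelian groups
\[
0\longrightarrow\mathrm{im}\,\pi_2(\tau)\longrightarrow\pi_2(M)\xrightarrow{\partial}m\Z\longrightarrow 0.
\]
Because $m\Z$ is free abelian (either $\Z$ or trivial), this sequence splits, yielding $\pi_2(M)\cong m\Z\oplus\mathrm{im}\,\pi_2(\tau)$ and completing the third bullet. The only subtlety beyond standard fibration yoga is the Euler-class identification of $\partial$, which is a well-known but not entirely trivial fact about principal circle bundles.
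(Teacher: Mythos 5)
Your proposal is correct and follows essentially the same route as the paper: both arguments run the long exact homotopy sequence of the fibration $S^1\hookrightarrow E\to M$ and reduce everything to identifying the connecting homomorphism $\partial:\pi_2(M)\to\pi_1(S^1)\cong\Z$ with the Euler pairing precomposed with the Hurewicz map. The paper simply records this identification as a commuting square without proof, whereas you supply the clutching-function justification and make explicit the splitting of the resulting short exact sequence onto $m\Z$; both are welcome but do not change the argument.
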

\begin{proof}
  Consider the long exact homotopy sequence
  \[
  \dots \to \pi_{\ell}(S^1)\to \pi_\ell(E) \stackrel{\pi_\ell(\tau)}{\longrightarrow}
  \pi_\ell(M)\to \pi_{\ell-1}(S^1)\to \dots\,,
  \]
  This readily shows the first assertion. For $\ell=2$ the connecting homomorphism fits into the commuting square
  \[
  \xymatrix{ 0 \ar[r] & \pi_2(E)\ar[r] &  \pi_2(M) \ar[r]\ar[d]& \pi_1(S^1)\ar[d] \ar[r]& \pi_1(E)\ar[r] & \pi_1(M)\ar[r] &0\\
                       & & H^S_2(M;\Z)\ar[r]& \Z & & & }
  \]
  where the vertical arrows are the Hurewicz map and the canonical
  isomorphism and the horizontal maps are the connecting homomorphism
  and the pairing with the Euler class. This readily implies the other two statements.
\end{proof}

\begin{lem}
If $M$ is non-aspherical, then $E$ is non-aspherical.
\label{lem:enonaspherical}
\end{lem}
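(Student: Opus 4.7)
The plan is to reduce the claim, via Lemma \ref{lem:homotopygroups}, to a case analysis on which homotopy group of $M$ is non-trivial. By assumption there exists some $\ell \geq 2$ with $\pi_\ell(M) \neq 0$, and the goal is to produce some $\ell' \geq 2$ with $\pi_{\ell'}(E) \neq 0$.

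\emph{Step 1 (high degree).} If $\pi_\ell(M) \neq 0$ for some $\ell \geq 3$, then Lemma \ref{lem:homotopygroups} directly gives an isomorphism $\pi_\ell(E) \cong \pi_\ell(M)$, so $\pi_\ell(E) \neq 0$ and we are done. So we may assume $\pi_\ell(M) = 0$ for every $\ell \geq 3$ and $\pi_2(M) \neq 0$.

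\emph{Step 2 (degree two).} Using the decomposition $\pi_2(M) \cong m\Z \oplus \mathrm{im}\,\pi_2(\tau)$ from Lemma \ref{lem:homotopygroups}, if $\mathrm{im}\,\pi_2(\tau) \neq 0$, then injectivity of $\pi_2(\tau)$ forces $\pi_2(E) \neq 0$, and again we are done. In the remaining case $\mathrm{im}\,\pi_2(\tau) = 0$ and $\pi_2(M) \cong m\Z$; since $\pi_2(M) \neq 0$ this forces $m \neq 0$ and hence $\pi_2(M) \cong \Z$.

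\emph{Step 3 (ruling out the last case).} I expect this to be the only real obstacle: one must show that a closed manifold $M$ with $\pi_\ell(M) = 0$ for all $\ell \geq 3$ and $\pi_2(M) \cong \Z$ cannot exist. The plan is to pass to the universal cover $\tilde M$, which is a finite-dimensional, simply connected manifold (hence of the homotopy type of a CW complex) with $\pi_2(\tilde M) \cong \pi_2(M) \cong \Z$ and $\pi_\ell(\tilde M) \cong \pi_\ell(M) = 0$ for all $\ell \geq 3$. Thus $\tilde M$ is a $K(\Z,2)$, so by Whitehead's theorem it is weakly equivalent to $\mathbb{CP}^\infty$, which forces $H^{2k}(\tilde M;\mathbb{Q}) \cong \mathbb{Q}$ for every $k \in \N$. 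This contradicts the fact that $\tilde M$ has the same finite dimension as $M$, so this case cannot occur. Combining all three steps yields the non-asphericity of $E$. \qed
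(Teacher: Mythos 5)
Your proof is correct and follows essentially the same route as the paper: reduce via Lemma \ref{lem:homotopygroups} to the case $\pi_2(M)\cong\Z$ with all higher homotopy groups vanishing, and rule that out by recognizing the universal cover as a $K(\Z,2)\simeq\C\PP^\infty$, whose unbounded cohomology contradicts finite-dimensionality. The only cosmetic difference is that you split the degree-two case directly on whether $\mathrm{im}\,\pi_2(\tau)$ vanishes, whereas the paper assumes $E$ aspherical and argues by contradiction; the content is identical.
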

\begin{proof}
  Recall that, by Lemma \ref{lem:homotopygroups}, $\pi_\ell(M)$ is
  isomorphic to $\pi_\ell(E)$ for every $\ell\geq 3$. In particular,
  if $\pi_\ell(M)\neq \{0\}$ for some $\ell\geq 3$, then also $\pi_\ell(E)\neq
  \{0\}$.  Thus, we are left with the case $\pi_2(M)\neq \{0\}$ and
  $\pi_\ell(M)=\{0\}$, for every $\ell\geq 3$. Assume by contradiction
  that $E$ is aspherical, i.e.\ $\pi_\ell(E) =0$ for all $\ell \geq
  2$. But then again by Lemma~\ref{lem:homotopygroups} we conclude
  that $\pi_2(M) \cong \Z$ and thus the universal cover of $M$
  satisfies
$$\pi_2(\widetilde M)\cong \Z,\qquad \pi_\ell(\widetilde M)=\{0\},  \qquad \forall\ \ell \neq 2.$$
In particular, $\widetilde M$ is homotopy equivalent
(c.f. \cite{Eilenberg:1945,Eilenberg:1950}) to the Eilenberg-Maclane
space $K(\Z,2)\cong\C\PP^\infty$.  This is however not possible for a
finite-dimensional manifold, since $H_{2j}(\C\PP^\infty,\Z)\cong\Z$
for every $j\in \N$.
\end{proof}

By the lemma above there exists a non-zero element $\mathfrak u\in
\pi_\ell(E)$ for some $\ell \geq 2$. Notice that, by Lemma
\ref{lem:homotopygroups}, $\pi_\ell(\tau)(\mathfrak u)\neq 0\in
\pi_\ell(M)$. With $\mathfrak u$ we now associate a suitable class of
paths in $\mathcal M_0$ over which we will perform the minimax
procedure.

We start observing that any continuous map
$$f:(B^{\ell-1},S^{\ell-2})\to (H^1(S^1,E),E),$$ 
defines a continuous map $v(f):S^\ell\to E$ (c.f. for instance
\cite[Proof of Theorem 2.4.20]{Klingenberg95}); here, with slight
abuse of notation, we have denoted with $E$ the set of constant loops
in $H^1(S^1,E)$. Conversely, every regular map $v:S^\ell\to E$,
defines a continuous map
$$f(v):(B^{\ell-1},S^{\ell-2})\to (H^1(S^1,E),E).$$

Notice furthermore that, by \eqref{estimateforfiberwiserotations} we can find a positive constant $T_0>0$ such that $\max \S_k|_{E_{T_0,0}}\leq \epsilon/4$, where $\epsilon>0$ 
is the constant given by Lemma \ref{boundactiononvdelta} and 
$$E_{T_0,0}:=\bigcup_{T\leq T_0} E\times \{T\}\times \{0\}.$$

Now set
$$\mathcal P:= \Big \{u=(f,T,\vp):(B^{\ell-1},S^{\ell-2})\to (\mathcal M_0,E_{T_0,0}) \Big |\ [v(f)]=\mathfrak u\Big \}.$$

We readily see that $\mathcal P\neq \emptyset$, since $(f(v),T,\vp)\in\mathcal P$ for any smooth map $v:S^\ell\to E$ such that $[v]=\mathfrak u$. Moreover, $\mathcal P$
is by construction invariant under the complete flow $\Phi^k$ defined in Subsection \ref{atruncatedgradient}. The last property of $\mathcal P$
we will need is that every element $u\in \mathcal P$ has to intersect $\partial \mathcal V_\delta$ (more precisely, $\partial \mathcal V_\delta^0$). 
Indeed, if $u(\cdot)\subseteq \mathcal V_\delta$, then $u(\cdot)$ would have to be entirely contained in $\mathcal V_\delta^0$ (simply because 
$\vp(S^{l-2})=0$ and $\mathcal V_\delta$ is the disjoint union of the sets $\mathcal V_\delta^a$, $a\in 2\pi \Z$) and hence, using the splitting 
$$\dot {f(s)} =\zeta(s) + \<\dot{f(s)},Z(f(s))\>Z(f(s))$$ 
with $\zeta(s) \in \ker \alpha$, we would get $e(\zeta(s))<\delta$ for every $s\in [0,1]$. In particular, since by construction $d\tau$ is 
an isometry on $\ker \alpha$, we would have that 
$e(\tau\circ f(s))<\delta$, for all $s\in [0,1].$
This would imply that $[\tau \circ f]=0\in \pi_\ell(M)$ (see for instance \cite[Section 2.4]{Klingenberg95}), in contradiction with our assumption 
(recall indeed that $\pi_\ell(\tau)(\mathfrak u)\neq0$).

We now define the minimax function 
$$c:(\frac 12,+\infty)\to (0,+\infty),\qquad c(k):= \inf_{u\in\mathcal P}\max_{\zeta\in B^{\ell-1}} \S_k(u(\zeta)).$$

Observe that $c(k)\geq \epsilon$, for every $u\in \mathcal P$ has to
intersect $\partial \mathcal V_\delta^0$. However, this is not enough
to exclude that the periods of a Palais-Smale sequence for $\S_k$ converge
to zero as $h\to +\infty$, as it might well be that $c(k)=2\pi a$ for
some $a\in \Z$.  For that we will need the piece of additional
information given by the following lemma.

\begin{lem} Let $u$ be any element of $\mathcal P$.
Suppose that $\zeta^*\in B^{\ell-1}$ is such that
\begin{equation}
\S_k(u(\zeta^*))\geq \max_{B^{\ell-1}}\S_k\circ u - \epsilon/2.
\label{almostmaximum}
\end{equation}
Then $u(\zeta^*)\notin \cup_{a\in \Z} (\{\S_k< -2\pi a + \epsilon/2\}\cap \mathcal V_\delta^a$).
\label{lem:almostmaximum}
\end{lem}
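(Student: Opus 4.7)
The plan is a proof by contradiction using the disjoint decomposition $\mathcal V_\delta=\sqcup_{a\in\Z}\mathcal V_\delta^a$ together with the action bound on each boundary $\partial\mathcal V_\delta^a$ provided by Lemma~\ref{boundactiononvdelta}. Suppose, for contradiction, that $u(\zeta^*)\in \{\S_k<-2\pi a+\epsilon/2\}\cap \mathcal V_\delta^a$ for some $a\in\Z$. The goal is to produce some $\zeta'\in B^{\ell-1}$ with $\S_k(u(\zeta'))>-2\pi a+\epsilon$, since then~\eqref{almostmaximum} forces
\[
\S_k(u(\zeta^*))\ \geq\ \max_{B^{\ell-1}}\S_k\circ u-\epsilon/2\ >\ -2\pi a+\epsilon/2,
\]
contradicting the standing hypothesis.

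To find such a $\zeta'$ I would distinguish two cases. If $a=0$, I would invoke the observation already made just before the statement of the lemma: every $u\in\mathcal P$ has to intersect $\partial\mathcal V_\delta^0$, for otherwise the image $u(B^{\ell-1})$ would stay inside $\mathcal V_\delta^0$ (since $u|_{S^{\ell-2}}\subset E_{T_0,0}$ lies in the $a=0$ component) and $[\tau\circ f]$ would vanish in $\pi_\ell(M)$, contradicting $\pi_\ell(\tau)(\mathfrak u)\neq0$. Lemma~\ref{boundactiononvdelta} then gives $\S_k(u(\zeta'))>\epsilon=-2\pi\cdot 0+\epsilon$ at the crossing point.

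If instead $a\neq 0$, I would use the disjoint-component structure of $\mathcal V_\delta$. Since $\mathcal V_\delta^a$ is open and $u(\zeta^*)\in\mathcal V_\delta^a$ whereas $u(S^{\ell-2})\subset E_{T_0,0}\subset\mathcal V_\delta^0$, any continuous arc in $B^{\ell-1}$ joining $\zeta^*$ to the boundary sphere must leave $\mathcal V_\delta^a$ and hence, by continuity, must hit $\partial\mathcal V_\delta^a$ at some parameter $\zeta'$. At this $\zeta'$ Lemma~\ref{boundactiononvdelta} again gives $\S_k(u(\zeta'))>-2\pi a+\epsilon$, and the contradiction closes as above.

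The main obstacle I anticipate is purely bookkeeping: one has to be careful that $\partial\mathcal V_\delta^a$ really only consists of the ``energy $=\delta$'' boundary and not of the $\vp=2\pi a\pm\sqrt\delta$ faces, so that Lemma~\ref{boundactiononvdelta} applies. But Lemma~\ref{boundonvp} rules out the $\vp$-type boundary since on $\{\int|\dot\gamma+\vp Z|^2<\delta\}$ the $\vp$-coordinate is strictly inside $(2\pi a-\sqrt\delta,2\pi a+\sqrt\delta)$. Once this is clear the two cases above merge into a single statement and the proof is immediate.
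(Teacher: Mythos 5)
Your proposal is correct and follows essentially the same route as the paper: argue by contradiction, produce a point $\zeta'$ with $u(\zeta')\in\partial\mathcal V_\delta^a$, apply Lemma~\ref{boundactiononvdelta} there, and derive a gap of more than $\epsilon/2$ contradicting \eqref{almostmaximum}. The only difference is that the paper simply asserts the existence of such a boundary crossing in one line, whereas you justify it with the (correct) two-case analysis $a=0$ versus $a\neq 0$ using $u(S^{\ell-2})\subset E_{T_0,0}\subset\mathcal V_\delta^0$ and the disjointness of the components $\mathcal V_\delta^a$.
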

\begin{proof}
Suppose by contradiction that there exists $a\in\Z$ such that $u(\zeta^*)\in \{\S_k< -2\pi a + \epsilon/4\}\cap \mathcal V_\delta^a$. Since $u\in \mathcal P$ there 
exists $\zeta\in B^{\ell-1}$ such that $u(\zeta)\in \partial \mathcal V_\delta^a$. Using Lemma \ref{boundactiononvdelta} we now readily see that 
\begin{align*}
  \max_{B^{\ell-1}} \S_k \circ u - \S_k(u(\zeta^*))&\geq \S_k(u(\zeta)) - \S_k(u(\zeta^*))\\
  &> -2\pi a + \epsilon + 2\pi a - \epsilon/2 = \epsilon/2,
\end{align*}
in contradiction with \eqref{almostmaximum}.
\end{proof}

Clearly, the function $c(\cdot)$ is monotonically increasing in $k$
and hence almost everywhere differentiable.  With the next proposition
we show that we can find Palais-Smale sequences
$(\gamma_h,T_h,\vp_h)\subseteq \mathcal M_0$ for $\S_k$ with $T_h$'s
bounded away from zero and uniformly bounded, as soon as $k$ is a
point of differentiability for $c(\cdot)$. The proof goes along the
line of \cite[Lemma 8.1]{Abbondandolo:2013is} (see also
\cite[Proposition 7.1]{Contreras:2006yo}) and \cite[Proposition
4.1]{Asselle:2014hc} and relies on the celebrated \textit{Struwe
  monotonicity argument} \cite{Struwe:1990sd}. This concludes the
proof of Theorem \ref{thm:main} in virtue of Lemma
\ref{lem:palaissmale},(2).

\begin{prop}
  Let $k^*$ be a point of differentiability for $c(\cdot)$. Then there
  exists a Palais-Smale sequence $(\gamma_h,T_h,\vp_h)\subseteq
  \mathcal M_0$ for $\S_{k^*}$ with $T_h$ bounded and bounded away from
  zero.
\label{prop:struwe}
\end{prop}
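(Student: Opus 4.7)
The plan is to carry out a Struwe monotonicity argument adapted to the present setting. The crucial structural feature is that $\S_k$ is affine in $k$ with slope $T$:
\[
\S_{k_1}(\gamma,T,\vp)-\S_{k_2}(\gamma,T,\vp) = (k_1-k_2)\,T, \qquad \forall\, (\gamma,T,\vp)\in\mathcal{M}.
\]
Together with the differentiability of $c$ at $k^*$, this identity is what forces periods of near-maxima to be bounded.

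First, I would choose a decreasing sequence $k_n\searrow k^*$ and, for each $n$, an almost-optimizer $u_n\in\mathcal{P}$ with
\[
\max_{\zeta\in B^{\ell-1}}\S_{k_n}(u_n(\zeta)) \leq c(k_n) + (k_n-k^*)^2.
\]
For any $\zeta$ with $\S_{k^*}(u_n(\zeta))\geq c(k^*)-(k_n-k^*)$, the displayed identity gives
\[
(k_n-k^*)\,T(u_n(\zeta)) \leq c(k_n)-c(k^*) + (k_n-k^*) + (k_n-k^*)^2,
\]
so that, dividing by $k_n-k^*$ and taking $n\to\infty$, any sequence of such $\zeta_n$ yields periods bounded above by $c'(k^*)+1+o(1)$. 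Define the set of ``good'' parameters
\[
G_n := \big\{\zeta \in B^{\ell-1} \bigm| \S_{k^*}(u_n(\zeta))\geq \max \S_{k^*}\circ u_n-\tfrac{\epsilon}{2}\big\}.
\]
By Lemma \ref{lem:almostmaximum}, any $\zeta\in G_n$ satisfies $u_n(\zeta)\notin \bigcup_{a\in\Z}\big(\{\S_{k^*}<-2\pi a+\epsilon/2\}\cap\mathcal{V}_\delta^a\big)$, and combining this with Lemma \ref{noncompleteness} (which says that $T\to 0$ forces entry into precisely such a region) yields a uniform lower bound $T(u_n(\zeta))\geq T_*>0$ for every $\zeta\in G_n$. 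Since $\max \S_{k^*}\circ u_n$ converges to $c(k^*)$ (by monotonicity of $c$ and the choice of $u_n$), for $n$ large one also has $T(u_n(\zeta))\leq T^*$ uniformly on $G_n$, with $T^*$ depending on $c'(k^*)$.

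Second, to extract a Palais--Smale sequence I argue by contradiction. Suppose no PS sequence at level $c(k^*)$ with periods in $[T_*/2, 2T^*]$ exists in $\mathcal{M}_0$. Then there is $\eta\in(0,\epsilon/8)$ such that
\[
|d\S_{k^*}(\gamma,T,\vp)|\geq \eta \qquad \text{whenever}\ |\S_{k^*}(\gamma,T,\vp)-c(k^*)|\leq \eta\ \text{and}\ T\in[T_*/2,2T^*].
\]
I would deform $u_n$ by the complete negative gradient flow $\Phi^{k^*}$ of Subsection \ref{atruncatedgradient}. Since $\mathcal{P}$ is invariant under $\Phi^{k^*}$, the curve $\tilde u_n := \Phi^{k^*}_{s_0}\circ u_n$ still lies in $\mathcal{P}$ for any $s_0>0$. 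A standard deformation estimate shows that, at a near-maximum $\zeta$ of $\S_{k^*}\circ u_n$ (which lies in $G_n$ for large $n$, hence has $T$-component in the good range and functional value within $\eta$ of $c(k^*)$), a flow time $s_0$ comparable to $\eta$ decreases $\S_{k^*}$ by a fixed amount, while the truncation of the flow is never triggered (since it only activates near fiberwise rotations, away from $G_n$). This forces $\max\S_{k^*}\circ\tilde u_n<c(k^*)$ for $n$ large, contradicting minimaxity.

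The main obstacle is the simultaneous control of the three conditions on the same near-maximum points: $T$ must be bounded above using the differentiability of $c$, $T$ must be bounded below away from fiberwise rotations using Lemma \ref{lem:almostmaximum}, and the functional value must lie in a small window around $c(k^*)$ where the norm of the differential fails to be small. Matching the threshold for ``near-maximum'' (here $\epsilon/2$, dictated by Lemma \ref{lem:almostmaximum}) with the contradiction hypothesis (depending on $\eta$) and with the accuracy $(k_n-k^*)^2$ of the minimization in $\mathcal{P}$ requires the careful bookkeeping sketched above. The conclusion of the proposition then follows from Lemma \ref{lem:palaissmale}\,(2), since any Palais--Smale sequence with periods in a compact subinterval of $(0,\infty)$ admits a converging subsequence whose limit is the desired critical point of $\S_{k^*}$.
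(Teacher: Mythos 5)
Your overall architecture is the paper's: Struwe monotonicity via the affine dependence $\S_{k_1}-\S_{k_2}=(k_1-k_2)T$, deformation by the truncated flow $\Phi^{k^*}$, Lemma \ref{lem:almostmaximum} to keep near-maxima away from the truncation regions, and a contradiction with minimaxity. However, there is a genuine gap in the middle step, where you claim uniform bounds $T_*\le T(u_n(\zeta))\le T^*$ for \emph{every} $\zeta$ in the set $G_n$ of $\epsilon/2$-near-maxima. Neither bound is justified on $G_n$. The upper bound obtained from the difference quotient $\big(\S_{k_n}(u_n(\zeta))-\S_{k^*}(u_n(\zeta))\big)/(k_n-k^*)$ only applies to those $\zeta$ with $\S_{k^*}(u_n(\zeta))\ge c(k^*)-(k_n-k^*)$, i.e.\ to a window of width $b_n:=k_n-k^*\to 0$ around $c(k^*)$; the set $G_n$ is defined by a window of the fixed width $\epsilon/2$ and is in general much larger, so no upper bound on $T$ is available there. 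The correct containment goes the other way: for $n$ large the narrow window $\{\S_{k^*}\circ u_n> c(k^*)-b_n\}$ (where $T\le M+2$ holds) is contained in $G_n$ because $(M+2)b_n<\epsilon/2$, and this is the set on which the paper runs the deformation.

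The lower bound is worse: avoiding $\bigcup_{a}\big(\{\S_{k^*}<-2\pi a+\epsilon/2\}\cap\mathcal V_\delta^a\big)$ does \emph{not} force $T$ to be bounded away from zero pointwise. A point can lie outside every $\mathcal V_\delta^a$ (large value of $\int_0^1|\dot\gamma+\vp Z(\gamma)|^2\,dt$) with $T$ arbitrarily small, or lie inside $\mathcal V_\delta^a$ with $\S_{k^*}\ge -2\pi a+\epsilon/2$ and $T$ small (take $\int_0^1|\dot\gamma+\vp Z|^2\,dt=\delta/2$ and $T\le\delta/(2\epsilon)$, say). Moreover Lemma \ref{noncompleteness}, which you cite for this, is a statement about maximal flow lines with $T\to 0$, not about individual points. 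The paper never needs a pointwise lower bound: it locates a Palais--Smale sequence inside the region $\mathfrak K=\{T\le M+3\}\setminus\bigcup_a\big(\{\S_{k^*}<-2\pi a+\epsilon/2\}\cap\mathcal V_\delta^a\big)$ and only \emph{then} invokes Corollary \ref{cor:palaissmale}, which says that a Palais--Smale sequence with $T_h\to 0$ must eventually enter one of the excluded sets; so the periods of that particular sequence are bounded away from zero. Your contradiction hypothesis, phrased in terms of the ill-defined window $[T_*/2,2T^*]$, should be replaced by the hypothesis that no Palais--Smale sequence exists in $\mathfrak K$. With that repair the rest of your argument goes through and coincides with the paper's.
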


\begin{proof}
Let $M$ be a right linear modulus of continuity for $c(\cdot)$ at $k^*$. This means that for all $k\geq k^*$ sufficiently close to $k^*$ we have
\begin{equation}
c(k)-c(k^*)\leq M (k-k^*).
\label{rightmodulus}
\end{equation}

Consider a sequence $k_n\downarrow k^*$ and set
$b_n:=k_n-k^*\downarrow 0$. Without loss of generality we can suppose
that \eqref{rightmodulus} holds for every $n\in \N$. For every
$n\in\N$ pick an element $u_n\in \mathcal P$ such that
$$\max_{\zeta\in B^{\ell-1}} \S_{k_n}(u_n(\zeta))<c(k_n)+b_n\leq c(k^*)+(M+1)b_n$$
If $\zeta\in B^{\ell-1}$ is such that $\S_{k^*}(u_n(\zeta))\geq c(k^*)-b_n$, then using \eqref{rightmodulus} we get 
$$T_n(\zeta) = \frac{\S_{k_n}(u_n(\zeta))-\S_{k^*}(u_n(\zeta))}{b_n} \leq M+2.$$
It follows that, for all $n\in \N$, $u_n$ is contained in
$$\{\S_{k^*}\leq c(k^*)-b_n\}\cup \Big \{\S_{k^*}\in\big (c(k^*)-b_n,c(k^*)+(M+1)b_n\big ), \ T\leq M+2 \Big \}.$$
For every $r\in [0,1]$ and every $n\in \N$ we now define $u^r_n\in
\mathcal P$ by
$$u_n^r(\zeta):=\Phi^{k^*}_r(u_n(\zeta)),\quad \forall \zeta\in B^{\ell-1},$$
where $\Phi^{k^*}_r$ is the complete flow defined in Subsection
\ref{atruncatedgradient}. Namely, for $\zeta\in B^{\ell-1}$ fixed,
$r\mapsto u^r_n(\zeta)$ is the flow-line of $\Phi^{k^*}$ starting at
$u_n(\zeta)$.  Since $\S_{k^*}$ is non-increasing along flow-lines of
$\Phi^{k^*}$ and the vector-field generating $\Phi^{k^*}$ has norm less than
or equal to one we have that, for all $r\in [0,1]$ and every $n\in\N$,
$$u_n^r \subset \{\S_{k^*}\leq c(k^*)-b_n\}\cup \Big \{\S_{k^*}\in\big (c(k^*)-b_n,c(k^*)+(M+1)b_n\big ), \ T\leq M+3 \Big \}.$$
For any  $\zeta\in B^{\ell-1}$ we now have two possibilities:
\begin{enumerate}[i)]
\item $\S_{k^*}(u^1_n(\zeta))\leq c(k^*)-b_n$.
\item $\S_{k^*}(u^r_n(\zeta))\in (c(k^*)-b_n,c(k^*)+(M+1)b_n)$, for every
  $r\in [0,1]$.
\end{enumerate}
If $\zeta\in B^{\ell-1}$ satisfies the second alternative then we have
\begin{align*}
  \S_{k^*}(u^r_n(\zeta))>c(k^*)-b_n&>\max_{B^{\ell-1}} \S_{k^*}\circ u_n^r - (M+2)b_n\\
  &>\max_{B^{\ell-1}} \S_{k^*}\circ u_n^r - \epsilon/2
\end{align*}
for every $n\in \N$ large enough. Therefore, by Lemma
\ref{lem:almostmaximum}, $u^r_n(\zeta)\notin \cup_{a\in \Z} (\{\S_{k^*}<
-2\pi a + \epsilon/2\}\cap \mathcal V_\delta^a)$ for every $r\in
[0,1]$ and every $n\in \N$ large enough. In other words, $r\mapsto
u^r_n(\zeta)$ is a genuine flow-line for the flow of the vector field
$X_{k^*}$ in \eqref{boundedvf}. We now claim that there exists a
Palais-Smale sequence for $\S_{k^*}$ contained in
$$\mathfrak K:= \{T\leq M+3\}\setminus \bigcup_{a\in \Z} (\{\S_{k^*}< -2\pi a + \epsilon/2\}\cap \mathcal V_\delta^a).$$
Notice that this completes the proof, since such a Palais-Smale
sequence has $T_h$ trivially uniformly bounded and bounded away from
zero by Corollary \ref{cor:palaissmale}.

Thus, suppose by contradiction that $\S_{k^*}$ does not have Palais-Smale
sequences contained in $\mathfrak K$, then we can find $\rho>0$ such
that $|X_{k^*}|\geq \rho$ on $\mathfrak K$. If $\zeta\in B^{\ell-1}$
satisfies the alternative ii) above, then we compute
$$(M+2)b_n>\S_{k^*}(u_n(\zeta))- \S_{k^*}(u_n^1(\zeta))=\int_0^1 |X_{k^*}|^2 dr \geq \rho^2,$$
which is impossible for $n$ large. It follows that, for $n$ large
enough, every $\zeta\in B^{\ell-1}$ satisfies the alternative i), that
is
$$\max_{B^{\ell-1}} \S_{k^*}\circ u_n^1\leq c(k^*)-b_n,$$
in contradiction with the definition of $c(k^*)$.
\end{proof}

\noindent \textbf{Acknowledgments.} We warmly thank Prof. Felix Schlenk and
Prof. Alberto Abbondandolo for helpful discussions.  L. A. is
partially supported by the DFG grant AB 360/2-1 ``Periodic orbits of
conservative systems below the Ma$\tilde{\text{n}}$\'e critical energy
value". We are also in debt to Gabriele Benedetti for his precious suggestions about a previous version of the draft.

\bibliography{_biblio}
\bibliographystyle{plain}

\end{document}